\theoremstyle{plain}
\newtheorem{them}{Theorem}[section]
\newtheorem{lem}{Lemma}[section]
\theoremstyle{remark}
\newtheorem{rema}{\bf Remark}[section]
\newtheorem{exams}{\textbf{Numerical Examples}}[section]
\def\NN{\mathds{N}}
\def\QQ{\mathbb{Q}}
\begin{document}
\selectlanguage{english}

\title[The $2$-rank of the class...]{The 2-rank of the class group of some real cyclic quartic number fields}
\author[A. Azizi]{Abdelmalek Azizi}
\address{Abdelmalek Azizi: Mohammed First University, Sciences Faculty, Mathematics Department, Oujda, Morocco}
\email{abdelmalekazizi@yahoo.fr}
\author[M. Tamimi]{Mohammed Tamimi}
\address{Mohammed Tamimi: Mohammed First University, Sciences Faculty, Mathematics Department, Oujda, Morocco}
\email{med.tamimi@gmail.com}
\author[A. Zekhnini]{Abdelkader Zekhnini}
\address{Abdelkader Zekhnini: Mohammed First University, Pluridisciplinary Faculty, Mathematics Department, Nador, Morocco}
\email{zekha1@yahoo.fr}

\subjclass[2000]{11R16; 11R29; 11R11; 11R80.}
\keywords{Real cyclic quartic number field, $2$-rank, $2$-class group, quadratic fields}

\maketitle

\selectlanguage{english}

\begin{abstract}
In this paper, we investigate  the $2$-rank of the class group of some real cyclic quartic number fields. Precisely, we consider the case where the quadratic subfield is $\mathbb{Q}(\sqrt{\ell})$ with $\ell\equiv5\,\pmod8$ is a prime.
\end{abstract}
\section{\bf Introduction}\label{sec1}
Let $K$ be a number field and $H$ its  $2$-class group, that is the $2$-Sylow subgroup of the ideal class group of $K$. The rank $r_2(H)$ of $H$  is the number of cyclic $2$-groups appearing in the decomposition of $H$ whose orders are a $2$ power,  that is   the dimension of the $\mathbb{F}_2$-vector space $H/H^2$, where $\mathbb{F}_2$ is the field of $2$ elements.

Many mathematicians  are interested in determining $r_2(H)$ and the power of $2$ dividing the class number of $K$.  Hasse \cite{Ha70}, Bauer \cite{Ba71} and others gave methods to determine the exact power of $2$ dividing the class number of a quadratic number field. These methods were developed by C. J. Parry and his co-authors  to determine $r_2(H)$ and the  power of $2$ dividing the class number of some  imaginary cyclic quartic  number field $K$ having a quadratic subfield $k$ with odd class number (e.g., \cite{Parry 77, BrPa78, Parry 75, Pa75, Pa80}). For this,  they needed a suitable genus theory convenient to their situation. Hence they showed that  the theory firstly developed by Hilbert (\cite{Hi94}), assuming an imaginary base field $k$, can be adapted to the situation where $K$ is a totally imaginary quartic cyclic extension of a totally real quadratic subfield  $k$. This theory can be applied, with minor modifications,  to any quartic number field $K$ having a quadratic subfield $k$ of odd class number.

The $2$-rank of any biquadratic number field  $K$  is determined (partially or totally) in many paper   (\cite{McPaRa95, McPaRa97, Parry 77, BrPa78, AM01, AM04}) up to the case: $K$ is a real quartic cyclic extension of the rational number field $\mathbb{Q}$. Denote by $k=\mathbb{Q}(\sqrt\ell)$ its unique quadratic subfield, we aim  to investigate this case,  whenever $k$ has odd class number and the norm of its fundamental unit equals $-1$, i.e. $\ell=2$ or $\ell$ is a prime congruent to $1\pmod4$. In this paper, we restrict ourselves to the case where  $\ell$ is a prime congruent to $5\pmod8$ for two reasons: to avoid long paper and the technics used in this case  are little bit different from those used in the other cases.

An outline of the paper is as follows. In  \S\ \ref{2} we summarize some preliminary results on quartic cyclic number fields  and the ambiguous class number formula, which we will use later.   The main theorems   are presented and proved in \S\ \ref{15}. In \S\ \ref{16} we characterize all real cyclic quartic number fields $K=\mathbb{Q}(\sqrt{n\epsilon_{0}\sqrt{\ell}})$, with $l$ is a prime congruent to $5\pmod8$,  having a $2$-class group trivial, cyclic, of rank $2$ or of rank $3$.
\section*{\bf Notations}
\noindent Throughout this paper, we adopt the following notations.
\begin{enumerate}[$\bullet$]
  \item $\mathbb{Q}$: the rational field.
  \item $\ell$: a prime integer congruent  to $5$ modulo $8$.
  \item $k=\mathbb{Q}(\sqrt{\ell})$: a quadratic field.
  \item $\epsilon_{0}$: the fundamental unit of $k$.
  \item $n$: a square-free positive integer relatively prime to $\ell$.
  \item $\delta=1$ or $2$.
  \item $d= n\epsilon_{0}\sqrt{\ell}$.
  \item $\mathbb{K}=k(\sqrt{d})$: a real quartic cyclic number field.
  \item $\mathcal{O}_k$ (resp. $\mathcal{O}_\mathbb{K}$): the ring of integers of $k$ (resp. $\mathbb{K}$).
  \item $H$: the 2-class group of  $\mathbb{K}$.
  \item $\mathfrak{2}$: the prime ideal of $k$ above $2$.
  \item $r_2(H)$: the rank of $H$.
  \item $\mathbb{K^{*}}$, $k^*$: the nonzero elements of the fields $\mathbb{K }$ and $k$ respectively.
  \item $N_{\mathbb{K}/k}(\mathbb{K})$: elements of $k$ which are norm from $\mathbb{K}$.
\item $p,\; q,\; p_{i},\; q_{j}$: odd prime integers.
\item $(\frac{x,\,y}{p})_{k}$: quadratic norm residue symbol over $k$.
\item $[\frac{\alpha}{\beta}]$: quadratic residue symbol for $k$.
\item $(\frac{a}{b})$: quadratic residue (Legendre) symbol.
\item $(\frac{a}{b})_{_{4}}$: rational $4$-th power residue symbol.
\end{enumerate}

\section{\bf Preliminary results}\label{2}
Let $K$ be a cyclic quartic extension of the rational number field $\mathbb{Q}$. By  \cite[Theorem 1]{wi87}, it is known  that $K$ can be expressed uniquely in the form $K=\mathbb{Q}(\sqrt{a(\ell+b\sqrt{\ell})})$, where $a, b, c$ and $\ell$ are integers satisfying the conditions: $a$ is odd and square-free, $\ell= b^2+c^2$ is  square-free, positive and relatively prime to $a$, with $b>0$ and $c>0$. Note that $K$  possesses a unique quadratic subfield $k=\mathbb{Q}(\sqrt\ell)$. Assuming  the class number of $k$  odd and $N_{k/\mathbb{Q}}(\epsilon_0)=-1$, then, by \cite{Xianke 84}, this is equivalent to that $K/k$ has an integral basis, and by \cite{Parry 90}, this is equivalent to the existence of an integer $n$ such that $K=\mathbb{Q}(\sqrt{n\epsilon_0\sqrt{\ell}})$ with $\epsilon_{0}$ is the fundamental unit of  $k$,  and
		\begin{center}
			$n=
			\left \{
			\begin{array}{rl}
			2a & \text{ if }\ell\equiv1\pmod4\text{ and }b\equiv 1 \pmod2\\
			a & \text{ otherwise}.
			\end{array}
			\right.
			$\end{center}

 Recall that the extensions $K/\mathbb{Q}$ were  investigated  by  Hasse in a paper (\cite{Hasse 48})  prior to that of Leopoldt (\cite{Leo}) on  the arithmetic interpretation of the class number  of real abelian fields. They were also investigated  by M. N. Gras \cite[...]{Gras-M-N 80-81, Gr77, Gr79} and others. We have the following remark.
\begin{rema}
Keeping the notations above of a cyclic quartic field, $K$ is then real if and only if $a> 0$ (equivalently $n>0$).
\end{rema}
\noindent The field $K$ also satisfies the following lemma.
\begin{lem}[\cite{Zink 66-67}]\label{lem: Zink 66-67}
	Let  $a, b$ and $c$ be positive integers,  $\ell= b^{2} + c^{2}$, with $a$ and $c$  odd, then $\mathbb{Q}(\sqrt{2a(\ell+b\sqrt{\ell}}))=\mathbb{Q}(\sqrt{a(\ell+c\sqrt{\ell}}))$.
\end{lem}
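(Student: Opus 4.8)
The plan is to pass to the common quadratic subfield $k=\mathbb{Q}(\sqrt{\ell})$ and reduce the claimed equality of quartic fields to a statement about squares in $k^{*}$. Writing $\alpha=2a(\ell+b\sqrt{\ell})$ and $\beta=a(\ell+c\sqrt{\ell})$, each of $\mathbb{Q}(\sqrt{\alpha})$ and $\mathbb{Q}(\sqrt{\beta})$ contains $\sqrt{\ell}$ (square the generator and subtract off the rational part), so both are the quadratic extensions $k(\sqrt{\alpha})$ and $k(\sqrt{\beta})$ of $k$. Two such extensions coincide if and only if $\alpha/\beta\in(k^{*})^{2}$, so everything reduces to exhibiting an explicit square root of $\alpha/\beta$ in $k$.

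Next I would compute $\alpha/\beta$ by rationalizing. The factors $a$ cancel, and multiplying numerator and denominator by the conjugate $\ell-c\sqrt{\ell}$ turns the denominator into $\ell^{2}-c^{2}\ell=\ell(\ell-c^{2})=\ell b^{2}$, where the crucial simplification $\ell-c^{2}=b^{2}$ uses the hypothesis $\ell=b^{2}+c^{2}$. After cancelling the common factor $\ell$ one is left with
\[
\frac{\alpha}{\beta}=\frac{2\bigl[(\ell-bc)+(b-c)\sqrt{\ell}\bigr]}{b^{2}}.
\]
Since $b^{2}$ is already a rational square, the whole question is whether $\gamma:=2(\ell-bc)+2(b-c)\sqrt{\ell}$ is a square in $k$.

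The heart of the argument is to recognize $\gamma$ as an explicit square. I would first compute its norm $N_{k/\mathbb{Q}}(\gamma)=4(\ell-bc)^{2}-4\ell(b-c)^{2}$ and use $\ell=b^{2}+c^{2}$ to collapse this to $4b^{2}c^{2}=(2bc)^{2}$, a perfect square, which is the necessary condition for $\gamma\in(k^{*})^{2}$. This norm computation simultaneously points to the correct square root: looking for $(x+\sqrt{\ell})^{2}$ and using the identity $\ell-2bc=(b-c)^{2}$ gives the clean formula $\gamma=(b-c+\sqrt{\ell})^{2}$, which one verifies by a one-line expansion of the right-hand side. Consequently
\[
\frac{\alpha}{\beta}=\left(\frac{b-c+\sqrt{\ell}}{b}\right)^{2}\in(k^{*})^{2},
\]
and therefore $k(\sqrt{\alpha})=k(\sqrt{\beta})$, which is the asserted equality.

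I expect the only genuine obstacle to be guessing the square root in the last step; the norm computation is precisely what makes this routine, since it forces $N_{k/\mathbb{Q}}(\gamma)=(2bc)^{2}$ and thereby pins down the candidate $b-c+\sqrt{\ell}$. I would also remark that the factor $2$ distinguishing the two radicands is exactly what makes $\gamma$ a perfect square, whereas the oddness of $a$ and $c$ plays no role in this purely algebraic identity and is simply inherited from the ambient parametrization of $K$.
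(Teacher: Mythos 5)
Your proof is correct, but there is no internal proof to compare it against: the paper states this lemma with the citation \cite{Zink 66-67} and gives no argument of its own, so your write-up is a self-contained verification of a quoted result rather than a variant of the authors' proof. The computations all check: rationalizing by $\ell-c\sqrt{\ell}$ turns the denominator into $\ell(\ell-c^{2})=\ell b^{2}$, giving $\alpha/\beta=2\bigl[(\ell-bc)+(b-c)\sqrt{\ell}\bigr]/b^{2}$, and the identity $(b-c)^{2}+\ell=2(\ell-bc)$ (equivalent to $\ell=b^{2}+c^{2}$) yields $\bigl(b-c+\sqrt{\ell}\bigr)^{2}=2(\ell-bc)+2(b-c)\sqrt{\ell}$, hence $\alpha/\beta=\bigl(\frac{b-c+\sqrt{\ell}}{b}\bigr)^{2}\in(k^{*})^{2}$ as claimed; the norm computation $N_{k/\mathbb{Q}}(\gamma)=4\bigl[(\ell-bc)^{2}-\ell(b-c)^{2}\bigr]=4b^{2}c^{2}$ is also right and is a sensible heuristic for locating the square root. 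Two minor remarks. First, you only need the trivial direction of the Kummer criterion: from $\sqrt{\alpha}=\bigl|\frac{b-c+\sqrt{\ell}}{b}\bigr|\,\sqrt{\beta}$ the equality of fields is immediate, so the ``coincide if and only if the ratio is a square'' statement (which presupposes $\alpha,\beta\notin(k^{*})^{2}$) can be bypassed entirely. Second, your reduction to quadratic extensions of $k$ tacitly uses $\sqrt{\ell}\notin\mathbb{Q}$; the lemma as isolated does not assume $\ell$ squarefree, but in the paper's setting it is, and even in the degenerate case the relation $\sqrt{\alpha}=r\sqrt{\beta}$ with $r\in\mathbb{Q}(\sqrt{\ell})$ still gives the asserted equality of fields over $\mathbb{Q}$. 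Your closing observation is likewise accurate: the oddness of $a$ and $c$ plays no role in the identity and is only inherited from the normalization of the parametrization $K=\mathbb{Q}(\sqrt{a(\ell+b\sqrt{\ell})})$ used elsewhere in the paper.
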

We will need the following theorem about the conductor  $f_K$ of $K$.
\begin{them}[\cite{wi87}]\label{wi87}
	The  conductor $f_K$  of the $($real or imaginary$)$ cyclic quartic field $K=\mathbb{Q}(\sqrt{a(\ell+b\sqrt{\ell}})$, where $a$ is an odd square-free integer, $\ell= b^2+c^2$ is a square-free positive integer relatively prime to $a$, with $b>0$ and $c>0$, is given by  $f_\mathbb{K}=2^{e}|a|\ell$, where   $e$  is defined by:
	$$e=
	\left \{
	\begin{array}{l}
	3,\; if\; \ell\equiv 2 \pmod8\;
	or\; \ell \equiv 1\pmod4 \;and\; b \equiv 1 \pmod2,\\
	2, \;if\; \ell \equiv 1\pmod4,\; b \equiv 0\, \pmod2,\; a+b \equiv 3\pmod4,\\
	0,\; if\; \ell \equiv 1\pmod4, \;b \equiv 0\, \pmod2,\; a+b \equiv 1\pmod4.
	\end{array}
	\right.
	$$		
\end{them}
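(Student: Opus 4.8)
The plan is to view $K$ as the fixed field of a Dirichlet character and to compute its conductor one prime at a time. Since $\mathrm{Gal}(K/\mathbb{Q})$ is cyclic of order $4$, there is a primitive quartic Dirichlet character $\chi$ cutting out $K$, and because $f_{\chi^2}=f_{\chi\cdot\chi}$ divides $\mathrm{lcm}(f_\chi,f_\chi)=f_\chi$, the conductor of the field satisfies $f_K=\mathrm{lcm}(f_\chi,f_{\chi^2},f_{\chi^3})=f_\chi$. Thus it suffices to determine the local conductor exponent $c_p$ of $\chi$ at each prime, so that $f_K=\prod_p p^{c_p}$. The quadratic subcharacter $\chi^2$ cuts out $k=\mathbb{Q}(\sqrt{\ell})$, so $f_{\chi^2}=d_k$, which is $\ell$ for $\ell\equiv1\pmod4$ and $4\ell$ for $\ell\equiv2\pmod8$; writing $\mu=a(\ell+b\sqrt{\ell})$ so that $K=k(\sqrt{\mu})$, the norm $N_{k/\mathbb{Q}}(\mu)=a^2\ell(\ell-b^2)=a^2\ell c^2$ records which primes can ramify. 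As a cross-check one has the conductor--discriminant relation $d_K=f_\chi^2\,d_k$.

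First I would treat the odd primes. For an odd $p\mid a$ one has $p\nmid d_k$, so $p$ is unramified in $k$, and since $v_p(\mu)=v_p(a)=1$ is odd the primes of $k$ above $p$ ramify in $K/k$; here the square-freeness of $\ell=b^2+c^2$ is used, because if $p\mid c$ as well then $\ell\equiv b^2\pmod p$ forces $p$ to split in $k$ rather than be inert, and factoring $\ell+b\sqrt{\ell}=\sqrt{\ell}(\sqrt{\ell}+b)$ shows both primes still ramify. In every case $p$ is tamely ramified, so $c_p=1$. For an odd $p\mid\ell$ the same square-freeness gives $p\nmid b$, whence at the prime $\mathfrak{p}$ of $k$ above $p$ one gets $v_{\mathfrak{p}}(\mu)=v_{\mathfrak{p}}(\sqrt{\ell})=1$, so $\mathfrak{p}$ ramifies in $K/k$ while $p$ already ramifies in $k/\mathbb{Q}$; thus $p$ is totally and tamely ramified in $K/\mathbb{Q}$ and again $c_p=1$. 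Collecting these contributions produces the odd part $|a|\cdot\ell_{\mathrm{odd}}$ of $f_K$.

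The genuinely delicate point is the exponent at $2$, which is the content of the three-way case split. Here I would complete at $2$ and analyze $K\otimes\mathbb{Q}_2$ directly. The decomposition of $2$ in $k$ is governed by $\ell\bmod 8$ ($2$ ramifies when $\ell\equiv2\pmod8$ and is unramified when $\ell\equiv1\pmod4$), and adjoining $\sqrt{\mu}$ introduces wild ramification exactly when the square class of $\mu$ in $k_{\mathfrak{2}}^\times$ is nontrivial in a way no unit adjustment can remove. By reducing $\mu=a(\ell+b\sqrt{\ell})$ modulo successive powers of $2$ and completing the square, one finds that $\ell\equiv2\pmod8$ or $(\ell\equiv1\pmod4,\ b\ \mathrm{odd})$ forces wild ramification with $e=3$, whereas for $\ell\equiv1\pmod4$ and $b$ even the finer invariant $a+b\bmod 4$ separates the partially ramified case $e=2$ from the unramified case $e=0$. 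One must also account for the factor of $2$ already carried by $\ell$ when $\ell\equiv2\pmod8$, so that the local exponent at $2$ and the contribution $v_2(\ell)$ recombine to give exactly $2^e$ in the normalization $f_K=2^e|a|\ell$.

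The main obstacle is precisely this $2$-adic computation. Pinning down the exact conductor exponent under wild ramification requires a careful analysis of the higher unit filtration of $\mathbb{Q}_2(\sqrt{\ell})$ together with the square class of $\mu$, and it is the sensitivity of that square class to the parity of $b$ and to $a+b\bmod 4$ that produces the three distinct values of $e$. Once the $2$-exponent is established, multiplying all local conductors yields $f_K=2^e|a|\ell$, completing the proof.
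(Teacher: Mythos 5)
First, a point of comparison: the paper does not prove this statement at all — it is imported verbatim, with citation, from Hardy–Hudson–Richman–Williams–Holtz \cite{wi87}, so there is no internal proof to measure you against; your attempt must stand on its own. Judged that way, your framework is sound and your odd-prime analysis is essentially complete: the reduction $f_K=f_\chi$ for a quartic character $\chi$ cutting out $K$ is correct (since $f_{\chi^2}\mid f_\chi$ and $f_{\chi^3}=f_\chi$), the norm computation $N_{k/\mathbb{Q}}\bigl(a(\ell+b\sqrt{\ell})\bigr)=a^2\ell c^2$ is right, and the parity argument for $p\mid a$ with $p\mid c$ (using $(\sqrt{\ell}+b)(\sqrt{\ell}-b)=c^2$, so that the valuation of $\sqrt{\ell}+b$ at either prime above $p$ is even, hence $v_{\mathfrak{p}}(\mu)$ odd) does show tame ramification with local exponent $1$ at every odd $p\mid a\ell$. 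One loose end you should close explicitly: odd primes $p\mid c$ with $p\nmid a\ell$ contribute exponent $0$ (the same factorization shows $v_{\mathfrak{p}}(\mu)$ is even there), whereas your phrase that the norm ``records which primes can ramify'' leaves open whether such $p$ enter the conductor; they must not, or the formula $f_K=2^e|a|\ell$ would be wrong.

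The genuine gap is at $p=2$, and it is not a small one: everything in the theorem beyond routine bookkeeping is the three-way formula for $e$, and your proposal asserts it rather than derives it. The sentence ``by reducing $\mu$ modulo successive powers of $2$ and completing the square, one finds that\ldots'' is a restatement of the conclusion, not an argument: nowhere do you actually compute the square class of $\mu=a(\ell+b\sqrt{\ell})$ in the completion of $k$ at $2$, nor convert that class into a conductor exponent via the unit filtration. Concretely, a proof must treat three local situations separately — $2$ ramified in $k$ when $\ell\equiv2\pmod8$ (where $v_{\mathfrak{2}}(\mu)=1$ and one must pin the exponent of $f_\chi$ at $2$ to exactly $4$, recombining with $v_2(\ell)=1$ to give $e=3$), $2$ inert when $\ell\equiv5\pmod8$, and $2$ split when $\ell\equiv1\pmod8$, the last requiring the two places above $2$ to be handled compatibly with the cyclic Galois action — and, in the unramified cases with $b$ even, must show that whether $\mu$ is a local square, or a unit square class of conductor $4$, is governed precisely by $a+b\bmod4$, while $b$ odd forces the wild exponent $3$. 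You candidly flag this $2$-adic computation as ``the main obstacle,'' but it is exactly where the hypotheses $b\equiv0,1\pmod2$ and $a+b\equiv1,3\pmod4$ enter the statement; without carrying it out, what you have is a plausible plan, consistent in outline with how \cite{wi87} establishes the result (an explicit discriminant computation combined with the conductor–discriminant formula), but not yet a proof.
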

We end this section by  recalling  the number of ambiguous ideal classes of a quadratic extension $K/k$.
\begin{them}[\cite{Azizi 04, Parry 75}]\label{1}
	Let   $K/k$ be a cyclic   extension of prime degree $p$. Denote by $A_{K/k}$ the number of ambiguous ideal classes of the number field $K$ with respect to $k$, then:  $$ A_{K/k} = h(k) 2^{\mu+r^{*}-(r+c+1)}$$
\noindent with:\\
	$r$ the number of the fundamental units of  $k$.\\
	$\mu$ the number of prime ideals of  $k$ $($finite or infinite$)$ which ramify in  $K$.\\
	$r^*$ is defined by $2^{r^{*}}=[N_{K/k}(K^{*})\cap E_{k}:E_{k}^2]$ with $E_{k}$ is the group of units of  $k$.\\
	$c=1$  if  $k$ contains  a primitive  $p$-th root of unity and  $c=0$ otherwise.\\
	Furthermore,  if $p=2$ and the class number of $k$ is  odd, then the  $2$-rank of the class group of $K$ is equal to
	$$\mu+r^{*}-(r+c+1).$$
\end{them}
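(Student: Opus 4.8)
The plan is to recognize this as Chevalley's ambiguous class number formula, derive it from the Galois cohomology of the standard exact sequences, and then read off the $2$-rank statement by a short norm argument. Write $G=\mathrm{Gal}(K/k)=\langle\sigma\rangle$, cyclic of order $p$, so that $A_{K/k}=|\mathrm{Cl}(K)^{G}|$. I would begin from the two exact sequences of $G$-modules $1\to E_K\to K^{*}\to P_K\to 1$ and $1\to P_K\to\mathcal{I}_K\to\mathrm{Cl}(K)\to 1$, where $E_K$, $K^{*}$, $P_K$, $\mathcal{I}_K$ denote the units, the nonzero elements, the principal ideals and all fractional ideals of $K$. Taking $G$-cohomology, Hilbert $90$ gives $H^{1}(G,K^{*})=0$, while $\mathcal{I}_K$ is a permutation module on the primes of $K$, so Shapiro's lemma yields $H^{1}(G,\mathcal{I}_K)=0$. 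The long exact sequence of the second sequence then identifies $\mathrm{Cl}(K)^{G}$ modulo the classes of ambiguous ideals with $H^{1}(G,P_K)$, and the first sequence embeds $H^{1}(G,P_K)$ into $H^{2}(G,E_K)$, which by the $2$-periodicity of Tate cohomology for cyclic $G$ equals $\widehat{H}^{0}(G,E_K)$.

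Since $G$ is cyclic, the orders can be assembled through Herbrand quotients. Two contributions enter: the ramified places give $\prod_v e_v=p^{\mu}$, each of the $\mu$ ramified places (finite or infinite) having ramification index $p$ because $[K:k]=p$ is prime, and the degree enters through the norm index $[C_k:N_{K/k}C_K]=p$ of the idele class group, furnishing the denominator $p$. Carrying out this bookkeeping produces Chevalley's formula $|\mathrm{Cl}(K)^{G}|=h(k)\,\prod_v e_v\,\big/\,\big(p\,[E_k:E_k\cap N_{K/k}K^{*}]\big)$. It remains to evaluate the unit index. Because $N_{K/k}(u)=u^{p}$ for $u\in k^{*}$, one has $E_k^{p}\subseteq E_k\cap N_{K/k}K^{*}$; Dirichlet's theorem gives $E_k\cong\mu_k\times\mathbb{Z}^{r}$, whence $[E_k:E_k^{p}]=p^{r+c}$, with $c=1$ exactly when $\mu_k$ contains a primitive $p$-th root of unity. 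Combining this with the defining relation $p^{r^{*}}=[E_k\cap N_{K/k}K^{*}:E_k^{p}]$ gives $[E_k:E_k\cap N_{K/k}K^{*}]=p^{r+c-r^{*}}$, and substituting $\prod_v e_v=p^{\mu}$ yields $A_{K/k}=h(k)\,p^{\mu+r^{*}-(r+c+1)}$, which is the first assertion (the stated case is $p=2$).

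For the $2$-rank, take $p=2$ and $h(k)$ odd, and write $H$ for the $2$-class group of $K$. The key point is that $h(k)$ odd forces $\sigma$ to act as inversion on $H$: the algebraic norm $1+\sigma$ on $\mathrm{Cl}(K)$ factors as the ideal norm $N_{K/k}\colon\mathrm{Cl}(K)\to\mathrm{Cl}(k)$ followed by extension of ideals, and $\mathrm{Cl}(k)_2=1$ since $h(k)$ is odd, so $x\,\sigma(x)=1$ for every $x\in H$, i.e. $\sigma(x)=x^{-1}$. Hence $H^{G}=H[2]$ and $|H^{G}|=2^{r_2(H)}$. Now match $2$-parts in $A_{K/k}=|\mathrm{Cl}(K)^{G}|=|H^{G}|\cdot|(\mathrm{Cl}(K)_{2'})^{G}|$: the last factor is odd and, since $h(k)$ is odd, the $2$-part of $A_{K/k}$ is exactly $2^{\mu+r^{*}-(r+c+1)}$. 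Therefore $2^{r_2(H)}=|H^{G}|=2^{\mu+r^{*}-(r+c+1)}$, which gives $r_2(H)=\mu+r^{*}-(r+c+1)$.

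The main obstacle is the careful bookkeeping behind Chevalley's formula: correctly identifying which Tate cohomology groups and local indices enter, and checking that the embedding $H^{1}(G,P_K)\hookrightarrow H^{2}(G,E_K)$ together with the ambiguous-ideal count reproduces exactly the factor $\prod_v e_v/\big(p\,[E_k:E_k\cap N_{K/k}K^{*}]\big)$ with no indexing or sign slips. By contrast, the $2$-rank statement is a short formal consequence once one knows that the norm acts as inversion on $H$.
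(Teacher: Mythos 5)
The paper contains no proof of this statement to compare against: Theorem~\ref{1} is imported verbatim from \cite{Azizi 04, Parry 75}, with the citation standing in for the argument. Measured against the classical sources, your derivation is correct and is exactly the standard one. The cohomological skeleton is right: Hilbert~90 and Shapiro give $H^{1}(G,K^{*})=H^{1}(G,\mathcal{I}_K)=0$, the ideal-class sequence identifies $\mathrm{Cl}(K)^{G}$ modulo the classes of ambiguous ideals with $H^{1}(G,P_K)$, and the unit sequence embeds that into $\widehat{H}^{0}(G,E_K)=E_k/N_{K/k}E_K$, with image $(E_k\cap N_{K/k}K^{*})/N_{K/k}E_K$. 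Your unit-index evaluation $[E_k:E_k\cap N_{K/k}K^{*}]=p^{\,r+c-r^{*}}$ via $N_{K/k}(u)=u^{p}$ and Dirichlet is correct, and you rightly read the paper's $2^{r^{*}}=[N_{K/k}(K^{*})\cap E_k:E_k^{2}]$ as $p^{r^{*}}=[N_{K/k}(K^{*})\cap E_k:E_k^{p}]$ for general $p$ (the base $2$ in the displayed formula is likewise a $p=2$ artifact of the paper's statement). The deduction of the rank statement is the standard genus-theory argument and is airtight as you give it: $1+\sigma$ on ideal classes factors through $N_{K/k}\colon\mathrm{Cl}(K)\to\mathrm{Cl}(k)$, whose $2$-part is trivial when $h(k)$ is odd, so $\sigma$ inverts $H$, hence $H^{G}=H[2]$ has order $2^{r_2(H)}$, and matching $2$-parts in $A_{K/k}=h(k)2^{\mu+r^{*}-(r+c+1)}$ finishes. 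The one step you assert rather than execute is the Herbrand-quotient bookkeeping that produces the denominator $p\,[E_k:E_k\cap N_{K/k}K^{*}]$ --- the count of the image of $\mathcal{I}_K^{G}$ in $\mathrm{Cl}(K)$ as $h(k)\prod_v e_v$ divided by an index governed by the norm index $[C_k:N_{K/k}C_K]=p$; you correctly flag this as the technical heart, and your identification of the inputs ($\prod_v e_v=p^{\mu}$, each ramification index being $1$ or $p$) is the right one, so this is an acknowledged appeal to a classical computation rather than a gap. One pedantic remark: ramified infinite places, with $e_v=2$, can occur only when $p=2$; for odd $p$ they contribute nothing to $\mu$, so your parenthetical \emph{finite or infinite} is harmless but only bites in the paper's case $p=2$.
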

\noindent The remark below explain how to get $r^*$.
\begin{rema}\label{14}
Since the unit group  of  $k$ is generated by $-1$ and $\epsilon_{0}$, so
\begin{enumerate}[$\bullet$]
	\item $r^*=0$,  if $-1, \epsilon_{0}$ and $-\epsilon_{0}$ are not in $N_{\mathbb{K}/k}(\mathbb{K}^{*})$.
	\item $r^*=1$, if $(-1$ is  in $N_{\mathbb{K}/k}(\mathbb{K}^{*})$ and    $\epsilon_{0}$  is not) or  $(-1$ is not   in $N_{\mathbb{K}/k}(\mathbb{K}^{*})$ and  $\epsilon_{0}$ or $-\epsilon_{0}$ is).
	\item $r^*=2$, if  $-1$ and $\epsilon_{0}$ are  in  $N_{\mathbb{K}/k}(\mathbb{K}^{*})$.
\end{enumerate}
\end{rema}
\section{The rank of $H$}\label{15}
 Let  $\ell$ be  a prime integer congruent  to $5\pmod8$ and  $n$ a square-free positive integer relatively prime to $\ell$.  Let  $\mathbb{K}=k(\sqrt{n\epsilon_{0}\sqrt{\ell}})$ and $k=\mathbb{Q}(\sqrt{\ell})$, where  $\epsilon_{0}$ is the fundamental unit of $k$.

   On one hand, as  $\ell\equiv1\pmod4$, so it is well known  (e.g., \cite{Zink 66-67}) that
$\epsilon_0=\frac{u+v\sqrt{\ell}}{2}$ and $N(\epsilon_0)=\frac{u+v\sqrt{\ell}}{2}\frac{u-v\sqrt{\ell}}{2}=\frac{u^2-v^{2}\ell}{4}=-1$. Since $u+v\sqrt{\ell}>0$, then $u-v\sqrt{\ell} < 0$. On the other hand,  the polynomial  $f(x)=x^4-mv\ell x^2+m^2\ell$ is irreducible. Indeed,  as $\ell$ divides both of $mv\ell$ and $m^2\ell$, and $\ell^2$ does not divide $m^2\ell$ since $m$ and $\ell$ are relatively prime, so the   Eisentein characterization implies our claim. Thus $f$ is  the characteristic polynomial of $\mathbb{K}$. By the Kaplansky's theorem \cite[page $68$]{Baker 13} and since  $m^2\ell[(mv\ell)^2-4m^2\ell]=m^4\ell^2(v^2\ell-4)=m^4\ell^{2}u^2 \in \mathbb{Q}^2$, the polynomial $f$ defines  a cyclic extension over $\mathbb{Q}$ of degree  $4$, i.e., $\mathbb{K}$ is a  real cyclic quartic number field and $k$ is its unique   quadratic subfield.

To determine the exact value of $r^{*}$, we will use   the norm residue symbol applied to primes of $k$ ramifying in $\mathbb{K}$.
	Note that the infinite prime ideals of $k$ don't ramify in the extension $\mathbb{K}$. Indeed, the discriminant of
   $g(X)=f(x^2)=X^2-mv\ell X+m^2\ell$ is
	$$\Delta =(mv\ell)^2-4m^2\ell=m^2\ell(v^2\ell-4)=m^2\ell u^2\geq 0,$$ hence the roots of $g$ are
	\begin{center}
$X_1=x_1^2=\frac{mv\ell-mu\sqrt{\ell}}{2}=-m\sqrt{\ell}(\frac{u-v\sqrt{\ell}}{2})\geq 0$ and
$X_2=x_2^2=\frac{mv\ell+mu\sqrt{\ell}}{2}= m\sqrt{\ell}(\frac{u+v\sqrt{\ell}}{2})=m\epsilon_0\sqrt{\ell}\geq 0$.\end{center}
From which we deduce the result claimed.

Note that,  as $n$ is relatively prime to $\ell$, so  the prime integers dividing   $n$  don't ramify in  $k$. Likewise   $2$ stay inert in  $k$ since $\ell\equiv 5\,\pmod8$.

To compute $r_2(H)$, the rank of   the $2$-class group $H$ of  $\mathbb{K}$, we will
 distinguish many cases. For this, let $p_{1}$, $p_2$, $\cdots$, $p_{t}$, $q_{1}$, $\cdots$, $q_{s}$ be  positive prime integers. Put $\delta=1$ or $2$.
\subsection{\textbf{First case: $n=\delta\prod_{i=1}^tp_{i}$ and, for all $i$,  $p_i\equiv1\pmod4$}}\label{case1}
\begin{them}\label{3}
Let $\mathbb{K}=\mathbb{Q}(\sqrt{n\epsilon_{0}\sqrt{\ell}})$ be a real cyclic quartic number field, where $\ell\equiv5\pmod8$ is a positive prime integer, $n$ a square-free positive  integer relatively prime to $\ell$ and  $\epsilon_{0}$ the fundamental unit of the quadratic subfield $k=\mathbb{Q}(\sqrt{\ell})$.
 Let  $n=\delta\prod_{i=1}^{i=t}p_{i}$ with $p_{i}\equiv 1\pmod4  $ for all $i\in\{1, \dots, t \}$ and $t$ is a positive integer.
	\begin{enumerate}[\rm1.]
		\item If, for all $i$, $(\frac{p_{i}}{\ell})=-1$, then  $r_2(H) =t$.
		\item If, for all $i$, $(\frac{p_{i}}{\ell})= 1$, then  $r_2(H) =2t$.
	\end{enumerate}
 Moreover, if $n=\delta\prod_{i=1}^{i=t_{1}}p_{i}\prod_{j=1}^{i=t_{2}}q_{j}$ with   $(\frac{p_{i}}{\ell})=-(\frac{q_{j}}{\ell})=-1$ and $p_{i}\equiv q_{j}\equiv1\,\pmod4$ for all $i\in\{1, \dots, t_{1}\}$ and  $j\in\{1, \dots, t_{2}\}$, then $r_2(H)= t_{1}+2t_{2}$.
\end{them}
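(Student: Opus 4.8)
The plan is to apply the ambiguous class number formula of Theorem~\ref{1} with $p=2$. Since $k=\mathbb{Q}(\sqrt{\ell})$ is real quadratic with odd class number, Dirichlet gives a single fundamental unit, so $r=1$, and $c=1$ because $k$ contains $-1$, a primitive square root of unity. The formula therefore collapses to
$$r_2(H)=\mu+r^{*}-3,$$
and the entire computation reduces to determining $\mu$, the number of finite primes of $k$ ramifying in $\mathbb{K}$ (the infinite primes are unramified, as shown above), together with $r^{*}$, the unit-norm exponent described in Remark~\ref{14}.

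First I would count the ramified finite primes. The prime $\mathfrak{l}=(\sqrt{\ell})$ of $k$ above $\ell$ satisfies $v_{\mathfrak{l}}(d)=1$ (since $n$ is prime to $\ell$ and $\epsilon_{0}$ is a unit), hence it ramifies in $\mathbb{K}/k$ and contributes $1$ to $\mu$. For a prime dividing $n$, the decisive datum is its splitting type in $k$, which for a prime $\equiv 1\pmod 4$ is governed by $\left(\frac{\ell}{\cdot}\right)=\left(\frac{\cdot}{\ell}\right)$ through reciprocity: a prime $p_i$ with $\left(\frac{p_i}{\ell}\right)=-1$ is inert in $k$ and yields one ramified prime, while a prime $q_j$ with $\left(\frac{q_j}{\ell}\right)=1$ splits in $k$ as $\mathfrak{q}_j\mathfrak{q}_j'$, each factor dividing $d$ to odd order and hence ramifying. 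Because these primes appear to the first power in $d$, the ramification is genuine, so the $t_1$ inert primes contribute $t_1$ and the $t_2$ split primes contribute $2t_2$. Writing $x\in\{0,1\}$ for the ramification of the inert prime $\mathfrak{2}$ above $2$, this gives $\mu=t_1+2t_2+1+x$.

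The key remaining input is the identity $x+r^{*}=2$, and this is precisely what the two pure cases of the theorem already establish: the norm-residue-symbol evaluation of $r^{*}$ via Remark~\ref{14} and the analysis of ramification at $\mathfrak{2}$ depend only on the value of $\delta$ and on the congruences $p_i\equiv q_j\equiv 1\pmod 4$, none of which is sensitive to whether an individual prime is inert or split in $k$. Concretely, when $\delta=1$ the prime $\mathfrak{2}$ stays unramified, so $x=0$, and one checks $r^{*}=2$; when $\delta=2$ one has $2\mid d$, whence $v_{\mathfrak{2}}(d)=1$, so $\mathfrak{2}$ ramifies, $x=1$, compensated by $r^{*}=1$. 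Either way $x+r^{*}=2$, and substituting,
$$r_2(H)=\mu+r^{*}-3=(t_1+2t_2+1+x)+r^{*}-3=t_1+2t_2+(x+r^{*})-2=t_1+2t_2,$$
as claimed.

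The hard part is the verification of $x+r^{*}=2$, because the norm residue symbol $\left(\frac{\,\cdot\,,\,\cdot\,}{\mathfrak{p}}\right)_{k}$ at $\mathfrak{2}$ and at the ramified odd primes must be handled with care, and its evaluation differs for inert versus split primes. However, since the present statement merely repartitions the primes dividing $n$ without altering any congruence condition or the local behavior at $2$ and $\ell$, this delicate computation is identical to the one carried out for parts~(1) and~(2); the only genuinely new content here is the additive bookkeeping in $\mu$, which is routine.
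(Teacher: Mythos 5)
Your final formula is numerically correct, but it is obtained from two false intermediate claims that happen to cancel, so the argument as written does not prove the theorem. The first error is your treatment of the prime above $2$: you infer from $\delta=1$ that $v_{\mathfrak{2}}(d)=0$ and hence that $\mathfrak{2}$ is unramified ($x=0$). At the even prime this inference is invalid (ramification of $k(\sqrt{d})/k$ at $\mathfrak{2}$ is wild and is not read off the parity of $v_{\mathfrak{2}}(d)$), and here it is actually false: when $\delta=1$ and all $p_i\equiv1\pmod4$ we have $n\equiv1\pmod4$, and in the canonical form of Theorem \ref{wi87} the condition $\ell\equiv5\pmod8$ forces $b$ even with $b^2\equiv4\pmod8$, i.e. $b\equiv2\pmod4$, whence $a+b\equiv n+2\equiv3\pmod4$ and the conductor exponent is $e=2>0$. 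So $2$ ramifies in $\mathbb{K}$, and since $2$ is inert in $k$, the ideal $\mathfrak{2}$ ramifies in $\mathbb{K}/k$ for \emph{both} values of $\delta$. A concrete check: for $n=1$, $\ell=5$ one has $\mathbb{K}=\mathbb{Q}(\sqrt{\epsilon_{0}\sqrt{5}})=\mathbb{Q}(\zeta_{20})^{+}$, of conductor $20$; consistently, the paper's Theorem \ref{12} lists $\mathfrak{2}$ among the ramified primes with $\mu=2$. Thus the correct count is $\mu=t_{1}+2t_{2}+2$ in every sub-case, not $t_{1}+2t_{2}+1+x$ with $x=0$ for $\delta=1$.

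The second error is the claim that $r^{*}=2$ when $\delta=1$. By Remark \ref{14} this would require $\epsilon_{0}\in N_{\mathbb{K}/k}(\mathbb{K}^{*})$, which never holds here: writing $\epsilon_{0}=\frac{u+v\sqrt{\ell}}{2}$ with $u^{2}-v^{2}\ell=-4$, the symbol at the ramified prime $(\sqrt{\ell})$ is
$$\left(\frac{\epsilon_{0},\,d}{(\sqrt{\ell})}\right)=\left(\frac{2u}{\ell}\right)=\left(\frac{2}{\ell}\right)\left(\frac{u}{\ell}\right)=-1,$$
since $\ell\equiv5\pmod8$ gives $\left(\frac{2}{\ell}\right)=-1$ while $v^{2}\ell\equiv4\pmod u$ forces $\left(\frac{\ell}{u}\right)=1=\left(\frac{u}{\ell}\right)$; this value is independent of $\delta$ and of the splitting of the $p_i$. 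Hence $r^{*}=1$ throughout the theorem ($-1$ is a norm because all of its local symbols are trivial, every odd ramified prime lying over a rational prime $\equiv1\pmod4$ or over $\ell$). Your $\delta=1$ configuration ($x=0$, $r^{*}=2$) even violates the product formula: the symbols of $\epsilon_{0}$ over your proposed ramified set $(\sqrt{\ell})$, $\mathfrak{p}_{i}$, $\wp_{i}$, $\bar{\wp}_{i}$ multiply to $-1$, which already shows $\mathfrak{2}$ must ramify. Finally, the proposal is circular where the real work lies: it derives the \emph{moreover} clause by citing the symbol computations of parts (1) and (2) as established, but never carries them out; the substantive content of the paper's proof is precisely the evaluation at split primes, via Parry's formula $\left[\frac{\epsilon_{0}\sqrt{\ell}}{\wp_{i}}\right]=\left(\frac{p_i}{\ell}\right)_{4}$, yielding $\left[\frac{\epsilon_{0}}{\wp_{i}}\right]=\left(\frac{p_i}{\ell}\right)_{4}\left(\frac{\ell}{p_i}\right)_{4}$, which is what guarantees that the entries at $\wp_{i}$, $\bar{\wp}_{i}$, whatever their signs, cannot alter $r^{*}=1$. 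Because your two mistakes cancel in the sum $x+r^{*}$, the answer $t_{1}+2t_{2}$ comes out right, but the proof needs to be redone with $\mathfrak{2}$ ramified and $r^{*}=1$.
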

\begin{proof}
  To prove the theorem assertions, we have to compute the integer $r^*$ ($2^{r^{*}}=[N_{K/k}(K^{*})\cap E_{k}:E_{k}^2]$) by applying Remark \ref{14}, and then we call Theorem \ref{1}. To deduce primes of $k$ ramifying in $\mathbb{K}$, we use Theorem \ref{wi87}.
\begin{enumerate}[\rm1.]
  \item If $(\frac{p_{i}}{\ell})=-1$  for all $i=1, \dots, t$,  then the prime ideals of $k$ which ramify in $\mathbb{K}$ are $(\sqrt{\ell})$,   $\mathfrak{2}$ and the prime ideals $\mathfrak{p}_{i}$,  $i=1,  \dots,  t$,  where  $\mathfrak{2}$ (resp. $\mathfrak{p}_{i}$) is the prime ideal of $k$ above $2$ (resp. $p_i$), this implies that the number of primes of $k$ ramifying in $\mathbb{K}$ is $\mu= t + 2$. Hence
$$\begin{array}{ll}
\left(\frac{-1, \,  d}{\mathfrak{p}_{i}}\right)=\left[\frac{-1}{\mathfrak{p}_{i}}\right]=\left(\frac{-1}{p_i}\right)=1  \text{ for all } i=1, \dots,  t. \\
\left(\frac{-1, \,  d}{\left(\sqrt{\ell}\right)}\right)=\left[\frac{-1}{\left(\sqrt{\ell}\right)}\right]=\left(\frac{-1}{\ell}\right)=1. \\
\left(\frac{\epsilon, \,  d}{\mathfrak{p}_{i}}\right)=\left[\frac{\epsilon}{\mathfrak{p}_{i}}\right]=\left(\frac{-1}{p_{i}}\right)=1
 \text{ for all }  i=1, \dots,  t. \\
\left(\frac{\epsilon, \,  d}{(\sqrt{\ell})}\right)=\left[\frac{\frac{u}{2}}{(\sqrt{\ell})}\right]=\left[\frac{2u}{(\sqrt{\ell})}\right]=\left(\frac{2u}{\ell}\right)=
\left(\frac{2}{\ell}\right)\left(\frac{u}{\ell}\right)=
-\left(\frac{\ell}{u}\right)=-1, \text{ indeed as }\\\epsilon=\frac{u+v\sqrt{\ell}}{2},  \text{ so }  -4=u^{2}-v^{2}\ell.
\end{array}$$
We summarize these results in the following table.
\begin{center}
	\begin{tabular}{|c|c|c|c|}
		\hline
		Unit $\backslash$ Character & $\sqrt{\ell}$ & $\mathfrak{p}_{i}$ & $\mathfrak{2}$\\
		\hline
		-1 &  +  & + & + \\
		\hline
		$\epsilon$ & - &+ & - \\
		\hline
		$-\epsilon$ & - & + & -\\
		\hline
	\end{tabular}
\end{center}
 So $r^{*}=1$, from which we infer that:  $$r_2(H) = \mu + r^{*} -3 = t + 2 + 1-3=t.$$
  \item If $(\frac{p_{i}}{\ell})=1$  for all $i=1, \dots, t$,  then the prime ideals of $k$ which ramify in $\mathbb{K}$ are $(\sqrt{\ell})$,   $\mathfrak{2}$ and the prime ideals $\wp_{i}$ and $\bar{\wp}_{i}$   with $p_i\mathcal{O}_k=\wp_{i}\bar{\wp}_{i}$,  $i=1, \dots, t$, in this case $\mu= 2t + 2$.  Hence
$$\left(\frac{-1, d}{\wp_{i}}\right)=\left(\frac{-1, d}{\bar{\wp}_{i}}\right)=\left[\frac{-1}{\wp_{i}}\right]=\left(\frac{-1}{p_{i}}\right)=1, \text{ for all } i=1, \dots, t.$$
$$\left(\frac{\epsilon, d}{\wp_{i}}\right)=\left[\frac{\epsilon}{\wp_{i}}\right],\text{ for all } i=1, \dots, t.$$
 To compute the last unity, put $p_i^{h_{0}}=\wp_{i}\bar{\wp}_{i}$ and  $\wp_{i}= a_{i} +b_{i} \sqrt{\ell}$ and $\bar{\wp}_{i}= a_{i} -b_{i} \sqrt{\ell},$ for all   $i$ (note that $\mathcal{O}_k$ is a principal ring). According to \cite{Parry 77}  we have
  $\left[\frac{\epsilon \sqrt{\ell}}{\wp_{i}}\right]= \left(\frac{p_i}{\ell}\right)_{4}.$  Thus
   $$\left[\frac{\epsilon }{\wp_{i}}\right]= \left(\frac{p_i}{\ell}\right)_{4}\left[\frac{ \sqrt{\ell}}{\wp_{i}}\right].$$
On the other hand, $$\left[\frac{ \sqrt{\ell}}{\wp_{i}}\right]=\left[\frac{ b_{i}^{2}\sqrt{\ell}}{\wp_{i}}\right] =\left[\frac{b_{i}(-a_{i}+a_{i} + b_{i} \sqrt{\ell})}{\wp_{i}}\right]=\left[\frac{ -a_{i}b_{i}}{\wp_{i}}\right]=\left(\frac{a_{i}}{p_i}\right)\left(\frac{b_{i}}{p_i}\right).$$
 As $p_i^{h_{0}}=a_{i}^{2} - b_{i}^{2}\ell$, so  $b_{i}^{2}\ell\equiv a_{i}^{2}\,  \pmod {p_i}$. Note that $\ell$ and  $p_i$ are relatively prime, then   $b_{i}^{2}\ell^{2}\equiv \ell a_{i}^{2}\,  \pmod {p_i}$,  this implies that  $\left(\frac{b_{i}}{p_i}\right)=\left(\frac{b_{i}\ell}{p_i}\right)=\left(\frac{b_{i}^{2}\ell^{2}}{p_i}\right)_{_{4}}=\left(\frac{\ell a_{i}^{2}}{p_i}\right)_{_{4}}=
 \left(\frac{\ell}{p_i}\right)_{4}\left(\frac{a_{i}}{p_i}\right)$.
 Finally,  $$\left[\frac{\epsilon }{\wp_{i}}\right]= \left(\frac{p_i}{\ell}\right)_{_{4}}\left(\frac{ a_{i}}{p_i}\right)\left(\frac{ b_{i}}{p_i}\right)=\left(\frac{p_i}{\ell}\right)_{_{4}}\left(\frac{ a_{i}}{p_i}\right)\left(\frac{\ell}{p_i}\right)_{_{4}}\left(\frac{ a_{i}}{p_i}\right)=\left(\frac{p_i}{\ell}\right)_{_{4}}\left(\frac{\ell}{p_i}\right)_{_{4}}.$$
Proceeding similarly,  we get   $\left[\frac{\epsilon }{\bar{\wp_{i}}}\right]=  \left(\frac{p_i}{\ell}\right)_{_{4}}\left(\frac{\ell}{p_i}\right)_{_{4}}$. The two values $\left(\frac{-1, \,  d}{(\sqrt{\ell})}\right)$ and  $\left(\frac{\epsilon,\,  d}{(\sqrt{\ell})}\right)$ are computed as above. These results are summarized in the following table:
\begin{center}
	\begin{tabular}{|c|c|c|c|c|}
		\hline
		Unit$\backslash$ Character & $\sqrt{\ell}$ & $ \wp_{i}$ & $\bar{\wp}_{i}$ & $\mathfrak{2}$\\
		\hline
		-1 &  +  & + & +  & + \\
		\hline
		$\epsilon$ & - & $\left(\frac{p_i}{\ell}\right)_{_{4}}\left(\frac{\ell}{p_i}\right)_{_{4}} $ & $\left(\frac{p_i}{\ell}\right)_{_{4}}\left(\frac{\ell}{p_i}\right)_{_{4}} $  & - \\
		\hline
		$-\epsilon$ & - & $\left(\frac{p_i}{\ell}\right)_{_{4}}\left(\frac{\ell}{p_i}\right)_{_{4}} $ & $\left(\frac{p_i}{\ell}\right)_{_{4}}\left(\frac{\ell}{p_i}\right)_{_{4}} $ & -\\
		\hline
	\end{tabular}
\end{center}
  So $r^{*}=1$, from which we infer that:  $$r_2\left(H\right) = \mu + r^{*} -3 = 2t + 2 + 1-3=2t.$$
\end{enumerate}
  If $n=\delta\prod_{i=1}^{i=t_{1}}p_{i}\prod_{j=1}^{i=t_{2}}q_{j}$ with $\left(\frac{p_{i}}{\ell}\right)=-\left(\frac{q_{j}}{\ell}\right)=-1$ for all $ i\in\{1, \dots, t_{1}\}$ and $j\in\{1, \dots, t_{2}\}$,  then according to the two cases above, $r^{*}=1$ and   $r_2\left(H\right)= t_{1}+2t_{2}\cdot$
  \end{proof}
\subsection{Second case: $n = 1$ or $2$}
\begin{them}\label{12}
Let $\mathbb{K}=\mathbb{Q}(\sqrt{n\epsilon_{0}\sqrt{\ell}})$ be a real cyclic quartic number field, where $\ell\equiv5\pmod8$ is a positive prime integer, $n$ a square-free positive  integer relatively prime to $\ell$ and  $\epsilon_{0}$ the fundamental unit of the quadratic subfield $k=\mathbb{Q}(\sqrt{\ell})$.     If $n=1$ or $2$, then $r_2\left(H\right)= 0$.
\end{them}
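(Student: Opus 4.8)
The plan is to apply Theorem \ref{1} in the form $r_2(H)=\mu+r^{*}-3$ (here $r=1$ because $k$ is real quadratic, and $c=1$ because $-1\in k$), so that it suffices to establish $\mu=2$ and $r^{*}=1$. I would organize the argument around the two ramified primes $(\sqrt{\ell})$ and $\mathfrak{2}$, exactly as in the proof of Theorem \ref{3}, the new point being that for $n\in\{1,2\}$ there are no prime divisors $p_i$ to contend with.

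First I would pin down $\mathfrak{2}$. Writing $\mathbb{K}=\mathbb{Q}(\sqrt{a(\ell+b\sqrt{\ell})})$ in the normalized form of Theorem \ref{wi87}, the value $n=1$ corresponds to $a=1$ with $b$ even, while $n=2$ corresponds to $a=1$ with $b$ odd. When $b$ is odd one has $e=3$; when $b$ is even the congruence $\ell=b^{2}+c^{2}\equiv5\pmod8$ with $c$ odd forces $b\equiv2\pmod4$, hence $e=2$. In either case $2^{e}\,\|\,f_{\mathbb{K}}$ with $e\ge2$, so $2$ ramifies in $\mathbb{K}=K$; since $2$ is inert in $k$, this ramification must occur in $\mathbb{K}/k$, i.e. $\mathfrak{2}$ ramifies. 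Together with $(\sqrt{\ell})$, which ramifies because $v_{(\sqrt{\ell})}(d)=1$ is odd, and recalling that the infinite primes are unramified ($\mathbb{K}$ is totally real) while no further finite prime divides $4d=4n\epsilon_{0}\sqrt{\ell}$, I obtain $\mu=2$.

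Next I would compute $r^{*}$ from Remark \ref{14} by testing $-1,\epsilon_{0},-\epsilon_{0}$ against these two primes. At $(\sqrt{\ell})$ the computation is identical to the one carried out in Theorem \ref{3} and is insensitive to $n$: $\left(\frac{-1,\,d}{(\sqrt{\ell})}\right)=\left[\frac{-1}{(\sqrt{\ell})}\right]=\left(\frac{-1}{\ell}\right)=1$ and $\left(\frac{\epsilon_{0},\,d}{(\sqrt{\ell})}\right)=\left(\frac{2u}{\ell}\right)=-1$. For the dyadic prime I would avoid the delicate local computation at $2$ and instead invoke Hilbert's product formula: the symbols $\left(\frac{\,\cdot\,,\,d}{\mathfrak{p}}\right)$ are trivial away from $\{(\sqrt{\ell}),\mathfrak{2},\infty\}$, and the two real places contribute $+1$ because $d>0$ under both embeddings of $k$, so the vanishing of the product over all places forces $\left(\frac{-1,\,d}{\mathfrak{2}}\right)=1$ and $\left(\frac{\epsilon_{0},\,d}{\mathfrak{2}}\right)=-1$.

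Assembling the resulting table as in Theorem \ref{3}, $-1$ is a local norm at every place and hence a global norm by the Hasse norm theorem, whereas $\epsilon_{0}$ and $-\epsilon_{0}$ already fail to be local norms at $(\sqrt{\ell})$ and so are not global norms; by Remark \ref{14} this gives $r^{*}=1$. Theorem \ref{1} then yields $r_2(H)=\mu+r^{*}-3=2+1-3=0$, as claimed. The one genuinely delicate point is the behavior at $\mathfrak{2}$: its ramification is what makes $\mu=2$ and, in the case $n=1$, rests on the observation $b\equiv2\pmod4$ (without it the formula would return a negative, hence impossible, rank); and the dyadic norm residue symbol, awkward to evaluate directly, is supplied for free by the product formula once the symbol at $(\sqrt{\ell})$ and the signs at the real places are known.
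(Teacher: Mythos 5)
Your proposal is correct and follows essentially the same route as the paper: both apply Theorem \ref{1} with $\mu=2$ (the ramified primes $(\sqrt{\ell})$ and $\mathfrak{2}$, read off from the conductor formula of Theorem \ref{wi87}) and $r^{*}=1$ (the norm residue table showing $-1$ a norm and $\pm\epsilon_{0}$ not). Your added details --- the verification that $b\equiv2\pmod4$ forces $e=2$ when $n=1$, and the use of the product formula to evaluate the symbols at $\mathfrak{2}$ --- merely make explicit steps the paper leaves implicit (the latter being visibly the paper's own mechanism, given the $(-1)^{t+1}$ entries in its later tables).
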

\begin{proof} As in the first case,  we  compute  $r^*$  by applying Remark \ref{14},  and then we call Theorem \ref{1}.
 For the two cases $n=1$ or $2$, and by Theorem \ref{wi87},  the prime ideals of  $k$ that ramify in  $\mathbb{K}$ are $(\sqrt{\ell})$ and $\mathfrak{2}$,  where $\mathfrak{2}$ is the prime ideal of $k$ above $2$, i.e. $\mu=2$. Proceeding as in the first case, we get the following table:
\begin{center}
	\begin{tabular}{|c|c|c|}
		\hline
		Unit$\backslash$ Character & $\sqrt{\ell}$ & $\mathfrak{2}$\\
		\hline
		-1 &  + & + \\
		\hline
		$\epsilon$ & -   & - \\
		\hline
		$-\epsilon$ & -  & -\\
		\hline
	\end{tabular}
\end{center}
 Hence $r^{*}=1$,  which implies that   $r_2\left(H\right) = \mu + r^{*} -3 =2+1-3=0.$
 \end{proof}
\subsection{\textbf{Third case: $n = \prod_{i=1}^{i=t}p_{i}$ with $t$ is odd and, for all $i$, $p_i\equiv3\pmod4$}}
\begin{them}\label{6}
	Let $\mathbb{K}=\mathbb{Q}(\sqrt{n\epsilon_{0}\sqrt{\ell}})$ be a real cyclic quartic number field, where $\ell\equiv5\pmod8$ is a positive prime integer, $n$ a square-free positive  integer relatively prime to $\ell$ and  $\epsilon_{0}$ the fundamental unit of the quadratic subfield $k=\mathbb{Q}(\sqrt{\ell})$. Assume  $n = \prod_{i=1}^{i=t}p_{i}$,  $p_{i}\equiv3\pmod4$ for all  $i=1, \dots, t$       and $t$ is a positive odd integer.
\begin{enumerate}[\rm1.]
	\item If, for all $i$,  $\left(\frac{p_{i}}{\ell}\right)=-1$,  then  $r_2\left(H\right) =t-1$.
	\item If, for all $i$,  $\left(\frac{p_{i}}{\ell}\right)= 1$,  then  $r_2\left(H\right) =2t-2$.
\end{enumerate}	
Moreover,  if $n=\prod_{i=1}^{t_{1}}p_{i}\prod_{j=1}^{t_{2}}q_{j}$, where  $ p_{i}\equiv q_{j}\equiv3\pmod4$ and  $\left(\frac{p_{i}}{\ell}\right)=-\left(\frac{q_{j}}{\ell}\right)=-1,$ for all  $i\in\{1, \dots, t_{1}\}$,  and for all $j\in\{1, \dots, t_{2}\}$ with $t_{1}+t_{2}$ is odd,  then  $r_2\left(H\right)=  t_{1} +2t_{2}-2$.
\end{them}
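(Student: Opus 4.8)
The plan is to run, for the present theorem, exactly the three-step scheme of Theorem \ref{3}: read off from Theorem \ref{wi87} the primes of $k$ ramifying in $\mathbb{K}$ (hence $\mu$), determine $r^{*}$ from Remark \ref{14} by deciding which of $-1,\epsilon_{0},-\epsilon_{0}$ lie in $N_{\mathbb{K}/k}(\mathbb{K}^{*})$, and then substitute into $r_2(H)=\mu+r^{*}-3$ supplied by Theorem \ref{1}. The whole novelty against the first case is a single sign, $\left(\frac{-1}{p}\right)=-1$ for $p\equiv3\pmod4$, together with a change in the behaviour of $\mathfrak{2}$.

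First I would settle $\mathfrak{2}$, which is where the hypothesis ``$t$ (resp. $t_1+t_2$) odd'' is spent. Since each prime dividing $n$ is $\equiv3\pmod4$ and their number is odd, $n=\prod_ip_i\prod_jq_j\equiv3\pmod4$; in particular $n$ is odd, so $n=a$ and $b$ is even in the presentation $\mathbb{K}=\mathbb{Q}(\sqrt{a(\ell+b\sqrt{\ell})})$. Writing $\ell=b^{2}+c^{2}\equiv5\pmod8$ and noting that $5$ is a sum of two squares modulo $8$ only as $1+4$, I get $b\equiv2\pmod4$ and $c$ odd; hence $a+b\equiv3+2\equiv1\pmod4$ and Theorem \ref{wi87} yields $e=0$, i.e. $\mathfrak{2}$ is \emph{unramified} in $\mathbb{K}/k$. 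Thus $\mathfrak{2}$ enters neither $\mu$ nor (being unramified with $d,\pm\epsilon_{0}$ all $\mathfrak{2}$-units) the norm-residue computation, in sharp contrast with Theorem \ref{3}. Counting $(\sqrt{\ell})$, the inert primes $\mathfrak{p}_i$ above the $p_i$ and the split pairs $\wp_j,\bar{\wp}_j$ above the $q_j$, I obtain $\mu=t_1+2t_2+1$ (and $\mu=t+1$, resp. $2t+1$, in items 1 and 2 above).

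For $r^{*}$ I would recopy the local symbols of the two items above, watching only the $3\pmod4$ signs. The entries that decide the three units are $\left(\frac{-1,\,d}{(\sqrt{\ell})}\right)=\left(\frac{-1}{\ell}\right)=1$ and $\left(\frac{\epsilon_{0},\,d}{(\sqrt{\ell})}\right)=-1$; at an inert prime $\left(\frac{-1,\,d}{\mathfrak{p}_i}\right)=\left[\frac{-1}{\mathfrak{p}_i}\right]=1$ because $-1$ is a square in $\mathbb{F}_{p_i^{2}}$ (as $4\mid p_i^{2}-1$), while $\left(\frac{\epsilon_{0},\,d}{\mathfrak{p}_i}\right)=\left(\frac{-1}{p_i}\right)=-1$; and at a split prime $\left(\frac{-1,\,d}{\wp_j}\right)=\left(\frac{-1}{q_j}\right)=-1$. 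As soon as one split prime is present ($t_2\ge1$), $-1$ already fails to be a local norm at $\wp_j$, so $-1\notin N_{\mathbb{K}/k}(\mathbb{K}^{*})$; and $\epsilon_{0},-\epsilon_{0}$ each carry a $-1$ at $(\sqrt{\ell})$, so neither is a norm. By Remark \ref{14}, $r^{*}=0$, whence
$$r_2(H)=\mu+r^{*}-3=(t_1+2t_2+1)+0-3=t_1+2t_2-2.$$
The purely inert case $t_2=0$ is the exception: there all $-1$-symbols are $+1$, so $-1$ is a norm, $r^{*}=1$, and $r_2(H)=t-1$ (item 1); item 2 is the case $t_1=0$. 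The $4$th-power symbols $\left(\frac{q_j}{\ell}\right)_{_{4}}\left(\frac{\ell}{q_j}\right)_{_{4}}$ of the $\epsilon_{0}$-row are irrelevant to $r^{*}$, since the $(\sqrt{\ell})$-entry alone already excludes $\epsilon_{0}$ and $-\epsilon_{0}$.

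The main obstacle is the ramification of $\mathfrak{2}$: the formula stands or falls on $e=0$, and that rests on extracting $b\equiv2\pmod4$ from $\ell\equiv5\pmod8$ and on the parity of the number of prime factors of $n$ — an error here would move $\mu$ by one and ruin the count. The only other delicate point is to propagate the sign $\left(\frac{-1}{q_j}\right)=-1$ through the split-prime computation of item 2 above, since it is exactly this sign (absent when $q_j\equiv1\pmod4$) that turns $-1$ into a non-norm and produces the corrective $-2$.
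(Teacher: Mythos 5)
Your proposal is correct and follows essentially the same route as the paper's proof: Theorem \ref{wi87} to identify the ramified primes (giving $\mu=t+1$, $2t+1$, or $t_{1}+2t_{2}+1$), norm-residue symbols at those primes together with Remark \ref{14} to get $r^{*}=1$ in the all-inert case and $r^{*}=0$ once a split prime appears, and Theorem \ref{1} for $r_2(H)=\mu+r^{*}-3$. The only differences are ones of detail, both to your credit: you make explicit the computation $n\equiv3\pmod4$, $b\equiv2\pmod4$, $a+b\equiv1\pmod4$, hence $e=0$ and $\mathfrak{2}$ unramified, which the paper treats as routine, and you correctly observe that the quartic-symbol entries $\left(\frac{p_{i}}{\ell}\right)_{4}\left(\frac{\ell}{p_{i}}\right)_{4}$ the paper tabulates are not needed for $r^{*}$, since the $(\sqrt{\ell})$-column alone excludes $\pm\epsilon_{0}$.
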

\begin{proof} As in the two cases above,  we  shall compute  $r^*$  by applying Remark \ref{14},  and then we call Theorem \ref{1}. As above Theorem \ref{wi87} gives us primes of $k$ that ramify in $\mathbb{K}$.
\begin{enumerate}[\rm1.]
\item If $\left(\frac{p_{i}}{\ell}\right)=-1$ for all  $i\in\{1, \dots, t\}$, then  the prime ideals of  $k$ which ramify in  $\mathbb{K}$ are $\mathfrak{p}_{i}$ and $(\sqrt{\ell})$, where $\mathfrak{p}_{i}$ is the prime ideal of $k$ above $p_i$. Hence, for all $i\in\{1, \dots, t\}$, we have:
$$\begin{array}{ll}
\left(\frac{-1,\, d}{\mathfrak{p}_{i}}\right)=\left[\frac{-1}{\mathfrak{p}_{i}}\right]= \left(\frac{1}{p_{i}}\right)=1 \text{ and }
\left(\frac{\epsilon,\, d}{\mathfrak{p}_{i}}\right)=\left[\frac{\epsilon}{\mathfrak{p}_{i}}\right]=\left(\frac{-1}{p_{i}}\right)= -1.
\end{array}$$
The two values $\left(\frac{-1, \,  d}{(\sqrt{\ell})}\right)$ and  $\left(\frac{\epsilon,\,  d}{(\sqrt{\ell})}\right)$ are computed as in the first case. We summarize these results in the following table:
\begin{center}
	\begin{tabular}{|c|c|c| }
		\hline
		Unit $\backslash$ Character & $(\sqrt{\ell})$&$\mathfrak{p}_{i}$ \\
		\hline
		-1 &+&  +  \\
		\hline
		$\epsilon$ & - &-   \\
		\hline
		$-\epsilon$ & - &-  \\
		\hline
	\end{tabular}
\end{center}
 Hence $r^{*}=1$,  which implies that $r_2\left(H\right) = \mu + r^{*} -3 =t+1+1-3=t-1.$
\item If $\left(\frac{p_{i}}{\ell}\right)=1$ for all  $i\in\{1, \dots, t\}$, then  the prime ideals of  $k$ which ramify in  $\mathbb{K}$ are
$(\sqrt{\ell})$,  $\wp_{i}$ and $\bar{\wp}_{i}$,  where $p_i\mathcal{O}_k=\wp_{i}\bar{\wp}_{i}$, $i=1, \dots, t$. Hence, for all $i\in\{1, \dots, t\}$, we have:
$$\left(\frac{-1,\, d}{\wp_{i}}\right)=\left(\frac{-1, d}{\bar{\wp}_{i}}\right)=\left[\frac{-1}{\wp_{i}}\right]=\left(\frac{-1}{p_{i}}\right)=-1.$$
 Proceeding as in the proof of Theorem \ref{3}, we get
 $$\left(\frac{\epsilon,\, d}{\wp_{i}}\right)=\left[\frac{\epsilon}{\wp_{i}}\right]=\left(\frac{p_i}{\ell}\right)_{4}\left(\frac{\ell}{p_i}\right)_{4}\cdot$$
  According to \cite{Parry 77}, we have $$\left[\frac{\epsilon \sqrt{\ell}}{\bar{\wp_{i}}}\right]=-\left(\frac{p}{\ell}\right)_{_{4}} \text{ then } \left[\frac{\epsilon }{\bar{\wp_{i}}}\right]=-\left(\frac{p}{\ell}\right)_{_{4}}\left[\frac{ \sqrt{\ell}}{\bar{\wp_{i}}}\right].$$
 On the other hand, $$\left[\frac{\sqrt{\ell}}{\bar{\wp_{i}}}\right]=
 -\left[\frac{ -b_{i}^{2}\sqrt{\ell}}{\bar{\wp_{i}}}\right] =-\left[\frac{b_{i}(-a_{i}+a_{i} - b_{i} \sqrt{\ell})}{\bar{\wp_{i}}}\right]
 =-\left[\frac{ -a_{i}b_{i}}{\bar{\wp_{i}}}\right]=\left(\frac{a_{i}}{p_i}\right)\left(\frac{b_{i}}{p_i}\right),$$
 which implies (as in the proof of Theorem \ref{3}) that
  $$\left[\frac{\epsilon }{\bar{\wp_{i}}}\right]= - \left(\frac{p_i}{\ell}\right)_{_{4}}\left(\frac{\ell}{p_i}\right)_{_{4}}\left(\frac{a}{p_i}\right)^{2}=-\left(\frac{p_i}{\ell}\right)_{_{4}}
  \left(\frac{\ell}{p_i}\right)_{_{4}}.$$

 The  two values $\left(\frac{-1, \,  d}{(\sqrt{\ell})}\right)$ and  $\left(\frac{\epsilon,\,  d}{(\sqrt{\ell})}\right)$ are computed as above. We summarize these results in the following table:
\begin{center}
	\begin{tabular}{|c|c|c|c|}
		\hline
		Unit/Character & $\sqrt{\ell}$ & $ \wp_{i}$ & $\bar{\wp}_{i}$ \\
		\hline
		-1 &  +  & - & - \\
		\hline
		$\epsilon$ & - & $\left(\frac{p_{i}}{\ell}\right)_{_{4}}\left(\frac{\ell}{p_{i}}\right)_{_{4}} $ & $-\left(\frac{p_{i}}{\ell}\right)_{_{4}}\left(\frac{\ell}{p_{i}}\right)_{_{4}} $   \\
		\hline
		$-\epsilon$ & - & $-\left(\frac{p_{i}}{\ell}\right)_{_{4}}\left(\frac{\ell}{p_{i}}\right)_{_{4}} $ & $\left(\frac{p_{i}}{\ell}\right)_{_{4}}\left(\frac{\ell}{p_{i}}\right)_{_{4}} $ \\
		\hline
	\end{tabular}
\end{center}
 Hence $r^{*}=0$,  which implies that:   $$r_2(H) = \mu + r^{*} -3 =2t+1+0-3=2t-2.$$
\end{enumerate}
Finally,  if $n=\prod_{i=1}^{t_{1}}p_{i}\prod_{j=1}^{t_{2}}q_{j},$ with $p_{i}\equiv q_{j}\equiv3\pmod4$ and   $\left(\frac{p_{i}}{\ell}\right)=-\left(\frac{q_{j}}{\ell}\right)=-1$ for all  $i\in\{1, \dots, t_{1}\}$ and  $j\in\{1, \dots, t_{2}\}$ with $t_{1}+t_{2}$ is odd, then according to the two cases above,  there are $t_{1} +2t_{2}+1$  prime ideals of $k$ which ramify in  $\mathbb{K}$  and $r^{*}=0$. Thus
$r_2(H)= t_{1} +2t_{2}+1+0-3 =t_{1} +2t_{2}-2.$
\end{proof}
\subsection{\textbf{Fourth case: $n =\delta\prod_{i=1}^{i=t}p_{i}$ with  $t$ is even or $n =2\prod_{i=1}^{i=t}p_{i}$ with  $t$ is odd and, for all $i$, $p_{i}\equiv 3\pmod4$}}
\begin{them}\label{7}
Let $\mathbb{K}=\mathbb{Q}(\sqrt{n\epsilon_{0}\sqrt{\ell}})$ be a real cyclic quartic number field, where $\ell\equiv5\pmod8$ is a positive prime integer, $n$ a square-free positive  integer relatively prime to $\ell$ and  $\epsilon_{0}$ the fundamental unit of the quadratic subfield $k=\mathbb{Q}(\sqrt{\ell})$. Assume  $n = \delta\prod_{i=1}^{i=t}p_{i} $ and $t$ is even or $n =2\prod_{i=1}^{i=t}p_{i}$ and  $t$ is odd with $ p_{i}\equiv 3\, \pmod4$,  $i=1, \dots, t$.
	\begin{enumerate}[\upshape1.]
		\item If, for all $i$,  $\left(\frac{p_{i}}{\ell}\right)=-1$,  then  $r_2(H) =t$.
		\item If, for all $i$,  $\left(\frac{p_{i}}{\ell}\right)= 1$, then  $r_2(H) =2t-1$.
	\end{enumerate}
Moreover,  if $n=\delta\prod_{i=1}^{t_{1}}p_{i}\prod_{j=1}^{t_{2}}q_{j}$,  with $t_{1}+t_{2}$ is even, or $n=2\prod_{i=1}^{t_{1}}p_{i}\prod_{j=1}^{t_{2}}q_{j}$ with $t_{1}+t_{2}$ is odd, where  $ p_{i}\equiv q_{j}\equiv3\pmod4$ and  $\left(\frac{p_{i}}{\ell}\right)=-\left(\frac{q_{j}}{\ell}\right)=-1,$ for all  $i\in\{1, \dots, t_{1}\}$ and  $j\in\{1, \dots, t_{2}\}$,  then  $r_2\left(H\right)=  t_{1} +2t_{2}-1$.	
\end{them}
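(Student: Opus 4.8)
The plan is to reproduce, for this fourth configuration, the two-step computation already carried out in Theorems~\ref{3} and~\ref{6}: read off the ramified primes of $k$ (hence $\mu$) from Theorem~\ref{wi87}, evaluate the norm-residue symbols of $-1,\epsilon_0,-\epsilon_0$ against these primes to obtain $r^{*}$ through Remark~\ref{14}, and conclude with $r_2(H)=\mu+r^{*}-3$ from Theorem~\ref{1}. The only difference with the third case is that the inert prime $\mathfrak{2}$ now ramifies in $\mathbb{K}/k$; this single extra ramified prime is exactly what raises every rank by one relative to Theorem~\ref{6}.

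The crux is therefore to verify that $\mathfrak{2}$ ramifies in all the subcases at hand. Writing $\mathbb{K}$ in the form $\mathbb{Q}(\sqrt{a(\ell+b\sqrt{\ell})})$ of Theorem~\ref{wi87} and invoking the dictionary of \S\ref{2}, the parity of $n$ dictates that of $b$: when $n$ is even one has $a=n/2$ and $b$ odd, so $e=3$; when $n$ is odd one has $a=n$ and $b$ even. In the odd case I would use that, for $\ell\equiv5\pmod8$, the even term of $\ell=b^{2}+c^{2}$ satisfies $b\equiv2\pmod4$, while $n\equiv3^{t}\equiv1\pmod4$ because $t$ is even and each $p_i\equiv3\pmod4$; hence $a+b\equiv3\pmod4$ and $e=2$. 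Either way $e>0$, so $\mathfrak{2}$ ramifies, and the same parity count ($n$ even, or $n$ odd with $n\equiv1\pmod4$) covers the mixed statement.

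Once $\mathfrak{2}$ is known to ramify, the symbol tables are those of Theorem~\ref{6} with one extra column for $\mathfrak{2}$, whose entries $+,-,-$ for $-1,\epsilon_0,-\epsilon_0$ are computed as in Theorem~\ref{12}. In the subcase $(\frac{p_i}{\ell})=-1$ the $p_i$ are inert, so $[\frac{-1}{\mathfrak{p}_i}]=1$ and $[\frac{\epsilon_0}{\mathfrak{p}_i}]=(\frac{-1}{p_i})=-1$ since $p_i\equiv3\pmod4$; thus $\mu=t+2$, the $-1$ row is identically $+$ whereas the $\epsilon_0$ and $-\epsilon_0$ rows are not, Remark~\ref{14} gives $r^{*}=1$, and $r_2(H)=t+2+1-3=t$. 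In the subcase $(\frac{p_i}{\ell})=1$ each $p_i$ splits as $\wp_i\bar\wp_i$ and the symbols at $\wp_i,\bar\wp_i$ are obtained from the quartic-residue identity of \cite{Parry 77} exactly as in Theorem~\ref{6}; now $\mu=2t+2$, the $-1$ row carries a $-$ at $\wp_i$ while the $\epsilon_0$ and $-\epsilon_0$ rows carry a $-$ at $(\sqrt{\ell})$, so no unit is a norm, $r^{*}=0$, and $r_2(H)=2t+2-3=2t-1$.

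For the mixed assertion I would merge the two tables: the $t_1$ inert primes $\mathfrak{p}_i$, the $2t_2$ split primes $\wp_j,\bar\wp_j$, together with $(\sqrt{\ell})$ and $\mathfrak{2}$, give $\mu=t_1+2t_2+2$; provided $t_2\geq1$, the $-$ in the $-1$ row at some $\wp_j$ and the $-$ of the $\epsilon_0,-\epsilon_0$ rows at $(\sqrt{\ell})$ force $r^{*}=0$, whence $r_2(H)=t_1+2t_2+2-3=t_1+2t_2-1$. I expect the genuine obstacle to be the $\mathfrak{2}$-ramification step, and inside it the congruence $b\equiv2\pmod4$ for $\ell\equiv5\pmod8$; once that is in place the remaining symbol evaluations are verbatim those of Theorems~\ref{3} and~\ref{6}.
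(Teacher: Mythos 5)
Your proposal is correct and follows essentially the same route as the paper's proof: ramified primes (hence $\mu$) from Theorem \ref{wi87}, norm-residue tables for $-1,\epsilon_{0},-\epsilon_{0}$ yielding $r^{*}$ via Remark \ref{14}, then $r_2(H)=\mu+r^{*}-3$ from Theorem \ref{1} — and you even make explicit the ramification of $\mathfrak{2}$ (via $e=3$ for $n$ even, and $e=2$ for $n$ odd with $n\equiv1\pmod4$ from $b\equiv2\pmod4$) which the paper dismisses as routine, as well as the implicit hypothesis $t_{2}\geq1$ in the mixed statement. One small inaccuracy: the $\epsilon_{0}$ and $-\epsilon_{0}$ entries at $\mathfrak{2}$ are not always $-$ as you assert by analogy with Theorem \ref{12} — the paper records $(-1)^{t+1}$ there, which the product formula forces to be $+$ when $t$ is odd — but since in every subcase $r^{*}$ is already decided by the columns at $(\sqrt{\ell})$ and at the primes above the $p_{i}$, this slip does not affect any of your values of $r^{*}$ or $r_2(H)$.
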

\begin{proof} We proceed as above, we first compute  $r^*$  by applying Remark \ref{14},  and then we apply Theorem \ref{1}. It is a routine, as above, to use  Theorem \ref{wi87} to get  primes of $k$ that ramify in $\mathbb{K}$.
\begin{enumerate}[\rm1.]
\item If $\left(\frac{p_{i}}{\ell}\right)=-1$ for all  $i\in\{1, \dots, t\}$, then  the prime ideals of  $k$ which ramify in  $\mathbb{K}$ are $\mathfrak{2}$,  $\mathfrak{p}_{i}$ and $(\sqrt{\ell})$, where $\mathfrak{p}_{i}$ (resp. $\mathfrak{2}$) is the prime ideal of $k$ above $p_i$ (resp. $2$). Proceeding as in the cases above, we get the following table.
\begin{center}
	\begin{tabular}{|c|c|c|c| }
		\hline
		Unit/Character & $\sqrt{\ell}$& $\mathfrak{2}$ & $\mathfrak{p_{i}}$ \\
		\hline
		-1 &+&  + & +  \\
		\hline
		$\epsilon$ & - & $(-1)^{t+1}$ & -  \\
		\hline
		$-\epsilon$ & - & $(-1)^{t+1}$ & - \\
		\hline
	\end{tabular}
\end{center}
 Hence $r^{*}=1$, from which we deduce that $$r_2(H) = \mu + r^{*} -3 =t+ 2 +1-3=t.$$
\item If $\left(\frac{p_{i}}{\ell}\right)=1$ for all  $i\in\{1, \dots, t\}$, then  the prime ideals of $k$ which ramify in  $\mathbb{K}$ are $(\sqrt{\ell})$,  $\mathfrak{2}$,  $\wp_{i}$,  and $\bar{\wp}_{i}$, where $p\mathcal{O}_k=\wp_{i}\bar{\wp}_{i}$,  $i=1, \dots, t$. Proceeding as in the cases above, we get the following table.
\begin{center}
	\begin{tabular}{|c|c|c|c|c|}
		\hline
		Unit/Character & $\sqrt{\ell}$ & $ \wp_{i}$ & $\bar{\wp}_{i}$ & $\mathfrak{2}$\\
		\hline
		-1 &  +  & - & -  & + \\
		\hline
		$\epsilon$ & - & $\left(\frac{p_i}{\ell}\right)_{_{4}}\left(\frac{\ell}{p_i}\right)_{_{4}} $ & $-\left(\frac{p_i}{\ell}\right)_{_{4}}\left(\frac{\ell}{p_i}\right)_{_{4}} $  &  $(-1)^{t+1}$ \\
		\hline
		$-\epsilon$ & - & $-\left(\frac{p_i}{\ell}\right)_{_{4}}\left(\frac{\ell}{p_i}\right)_{_{4}} $ & $\left(\frac{p_i}{\ell}\right)_{_{4}}\left(\frac{\ell}{p_i}\right)_{_{4}} $ &  $(-1)^{t+1}$\\
		\hline
	\end{tabular}
\end{center}
 Hence $r^{*}=0$, from which we deduce that $$r_2(H) = \mu + r^{*} -3 = 2t + 2 + 0 -3=2t-1.$$
\end{enumerate}
Assume  $n=\delta\prod_{i=1}^{t_{1}}p_{i}\prod_{j=1}^{t_{2}}q_{j}$,  with $t_{1}+t_{2}$ is even, or $n=2\prod_{i=1}^{t_{1}}p_{i}\prod_{j=1}^{t_{2}}q_{j}$, with $t_{1}+t_{2}$ is odd, where  $ p_{i}\equiv q_{j}\equiv3\pmod4$ and  $\left(\frac{p_{i}}{\ell}\right)=-\left(\frac{q_{j}}{\ell}\right)=-1,$ for all  $i\in\{1, \dots, t_{1}\}$  and  $j\in\{1, \dots, t_{2}\}$, then according to the previous discussion we obtain  $r^{*}=0$, and thus $r_2\left(H\right)=  t_{1} +2t_{2}-1.$	
\end{proof}
\subsection{\textbf{Fifth case: $n =\prod_{i=1}^{i=t}p_{i}\prod _{j=1}^{j=s}q_{j}$,  $p_{i}\equiv-q_{j}\equiv1\pmod4$  $\forall (i, j)$ and $s$ is odd}}
\begin{them}
Let $\mathbb{K}=\mathbb{Q}(\sqrt{n\epsilon_{0}\sqrt{\ell}})$ be a real cyclic quartic number field, where $\ell\equiv5\pmod8$ is a positive prime integer, $n$ a square-free positive  integer relatively prime to $\ell$ and  $\epsilon_{0}$ the fundamental unit of the quadratic subfield $k=\mathbb{Q}(\sqrt{\ell})$. Assume $n =\prod_{i=1}^{i=t}p_{i}\prod _{j=1}^{j=s}q_{j}$ with  $p_{i}\equiv -q_{j}\equiv 1\pmod4$ for all  $(i, j)\in\{1, \dots, t\}\times\{1, \dots, s\}$ and $s$ is odd. Denote by $h$ the number of prime ideals of $k$ dividing all the  $p_{i}'s$,  $i\in\{1, \dots, t\}$.
	\begin{enumerate}[\rm1.]
	\item If, for all $j$,  $\left(\frac{q_{j}}{\ell}\right)=-1$, then  $r_2(H) =h+s-1$.
	\item If, for all $j$, $\left(\frac{q_{j}}{\ell}\right)=1$, then  $r_2(H) =h+2s-2$.
    \end{enumerate}
Moreover,   if  $\displaystyle\prod _{j=1}^{j=s}q_{j}=\prod_{i=1}^{i=s_1}q_{i}\prod _{j=s_1+1}^{j=s}q_{j}$ with   $s$ odd, $q_{i}\equiv q_{j}\equiv3\, \pmod4$ and $\left(\frac{q_{i}}{\ell}\right)=-\left(\frac{q_{j}}{\ell}\right)=-1$, for all $i\in\{1, \dots, s_1\}$ and  $j\in\{s_1+1, \dots, s\}$,   then  $r_2(H)=h+s_{1}+2(s-s_{1})-2$.
\end{them}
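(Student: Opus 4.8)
The plan is to run the same scheme as in the previous four cases: list the finite primes of $k$ ramifying in $\mathbb{K}$ to get $\mu$, assemble the table of quadratic norm residue symbols $\left(\frac{u,d}{\mathfrak{p}}\right)_k$ for $u\in\{-1,\epsilon_0,-\epsilon_0\}$, read off $r^*$ via Remark \ref{14}, and finally invoke Theorem \ref{1} in the form $r_2(H)=\mu+r^*-3$ (here $r=1$ and $c=1$ since $-1\in k$). Because the infinite primes are unramified and each unit is everywhere integral, a unit lies in $N_{\mathbb{K}/k}(\mathbb{K}^*)$ exactly when all entries of its row equal $+1$, so the whole task reduces to showing that each of the three rows contains a $-1$.

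First I would record the ramification data. Since $s$ is odd, each $q$-factor is $\equiv 3\pmod4$ and each $p$-factor is $\equiv 1\pmod4$, we get $n\equiv 3\pmod4$; hence by Theorem \ref{wi87} the prime $\mathfrak{2}$ is unramified, exactly as in the third case. The ramifying primes are therefore $(\sqrt{\ell})$, the $h$ primes above the $p_i$, the $s_1$ inert primes $\mathfrak{q}_i$ above the $q_i$ with $\left(\frac{q_i}{\ell}\right)=-1$, and the $2(s-s_1)$ split primes $\wp_j,\bar\wp_j$ above the $q_j$ with $\left(\frac{q_j}{\ell}\right)=1$. Thus $\mu=1+h+s_1+2(s-s_1)$.

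Next I would fill in the table by importing the local computations of Theorems \ref{3} and \ref{6}: at $(\sqrt{\ell})$ one has $\left(\frac{-1,d}{(\sqrt\ell)}\right)=+1$ and $\left(\frac{\epsilon_0,d}{(\sqrt\ell)}\right)=-1$; above each $p_i$ the unit $-1$ gives $+1$ (as $p_i\equiv1\pmod4$) while $\epsilon_0$ gives $\left(\frac{p_i}{\ell}\right)_4\left(\frac{\ell}{p_i}\right)_4$, with \emph{equal} values on $\wp_i$ and $\bar\wp_i$; above the inert $\mathfrak{q}_i$ one has $\left(\frac{-1,d}{\mathfrak{q}_i}\right)=+1$ and $\left(\frac{\epsilon_0,d}{\mathfrak{q}_i}\right)=\left(\frac{-1}{q_i}\right)=-1$; and above a split $q_j$ the symbol of $-1$ equals $\left(\frac{-1}{q_j}\right)=-1$ on both $\wp_j$ and $\bar\wp_j$, whereas the values of $\epsilon_0$ on $\wp_j$ and $\bar\wp_j$ are \emph{opposite}, namely $\pm\left(\frac{q_j}{\ell}\right)_4\left(\frac{\ell}{q_j}\right)_4$, just as in Theorem \ref{6}. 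The $-\epsilon_0$-row is then the entrywise product of the $-1$- and $\epsilon_0$-rows by bimultiplicativity.

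The decisive observation is that every one of the three rows contains a $-1$: the $\epsilon_0$-row (hence also the $-\epsilon_0$-row) already carries a $-1$ at $(\sqrt\ell)$, and the $-1$-row carries a $-1$ at each split prime $\wp_j$ above a $q_j$, of which there is at least one because $s_1<s$. Therefore none of $-1$, $\epsilon_0$, $-\epsilon_0$ belongs to $N_{\mathbb{K}/k}(\mathbb{K}^*)$, so Remark \ref{14} gives $r^*=0$, and Theorem \ref{1} yields
$$r_2(H)=\mu+r^*-3=1+h+s_1+2(s-s_1)-3=h+s_1+2(s-s_1)-2.$$
I expect the only genuine subtlety to be the sign bookkeeping at the split $q_j$: the opposite values of $\left(\frac{\epsilon_0,d}{\wp_j}\right)$ and $\left(\frac{\epsilon_0,d}{\bar\wp_j}\right)$ (the crux already of Theorem \ref{6}) together with the $-1$ of $-1$ at those same primes are precisely what forces $r^*=0$ here rather than the value $r^*=1$ of the all-inert subcase. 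A safeguard is the reciprocity (product) formula, which demands an even number of $-1$'s in each row; for the $\epsilon_0$-row this count is $1+s$ plus an even contribution from the split $p_i$, which is even exactly because $s$ is odd, confirming the table is internally consistent.
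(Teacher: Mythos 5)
Your overall scheme is exactly the paper's: ramification data from Theorem \ref{wi87} (with $\mathfrak{2}$ unramified since $n\equiv3\pmod4$, $s$ being odd), the table of norm residue symbols with the quartic-symbol values imported from Theorems \ref{3} and \ref{6}, then Remark \ref{14} and Theorem \ref{1} in the form $r_2(H)=\mu+r^*-3$. All your local entries match the paper's tables (equal signs on $\wp_i,\bar\wp_i$ over a split $p_i$, opposite signs $\pm\left(\frac{q_j}{\ell}\right)_{4}\left(\frac{\ell}{q_j}\right)_{4}$ over a split $q_j$, the value $-1$ of $\epsilon_0$ at $(\sqrt{\ell})$ and at inert $\mathfrak{q}_j$), and your uniform treatment of arbitrary splitting of the $p_i$'s is in fact tidier than the paper's route, which works out four homogeneous subcases and then interpolates to general $h$; your product-formula parity check is a sound extra safeguard not in the paper. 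One small slip of phrasing: over an \emph{inert} $p_i$ the value of $\epsilon_0$ is $\left(\frac{-1}{p_i}\right)=+1$, not $\left(\frac{p_i}{\ell}\right)_{4}\left(\frac{\ell}{p_i}\right)_{4}$ (that formula is the split-prime computation); this is harmless for your argument, which only uses the $-1$ at $(\sqrt{\ell})$ in the $\epsilon_0$-row.

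There is, however, a genuine omission: as written, your proof covers only assertion 2 and the ``Moreover'' claim, because your decisive observation --- that all three rows contain a $-1$, hence $r^*=0$ --- explicitly requires $s_1<s$, i.e.\ at least one split $q_j$. Assertion 1 of the theorem is precisely the excluded case $s_1=s$ (all $\left(\frac{q_j}{\ell}\right)=-1$), where the blanket conclusion $r^*=0$ is false: there the $-1$-row is identically $+1$, since $\left(\frac{-1}{\ell}\right)=1$ at $(\sqrt{\ell})$, $\left(\frac{-1}{p_i}\right)=1$ at the primes over $p_i\equiv1\pmod4$, and at an inert $\mathfrak{q}_j$ the symbol is $+1$ because $-1$ is a square in the residue field $\mathbb{F}_{q_j^2}$. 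Hence $-1\in N_{\mathbb{K}/k}(\mathbb{K}^*)$ (all local symbols are trivial and $\mathbb{K}/k$ is cyclic, so the Hasse norm theorem applies), while $\epsilon_0$ and $-\epsilon_0$ are excluded by the entry $-1$ at $(\sqrt{\ell})$; Remark \ref{14} then gives $r^*=1$, and with $\mu=1+h+s$ one gets $r_2(H)=\mu+1-3=h+s-1$, which is the missing first assertion (this is how the paper's subcases with all $q_j$ inert conclude). You clearly know this --- you mention ``the value $r^*=1$ of the all-inert subcase'' in passing --- but you never derive it, so the explicit case split $s_1=s$ versus $s_1<s$ must be added for the proof to cover the full statement.
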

\begin{proof}Proceeding as above, we have  four cases to discus.
\begin{enumerate}[\rm1.]
	\item  If $\left(\frac{p_{i}}{\ell}\right)=\left(\frac{q_{j}}{\ell}\right)=-1$, for all $i=1, \dots, t$ and for all $j=1, \dots, s$, then   the prime ideals of  $k$ which ramify in $\mathbb{K}$ are $(\sqrt{\ell})$, $\mathfrak{ p}_{i}$ and $\mathfrak{q}_{j}$,  the prime ideals of $k$ above ${ p}_{i}$ and ${q}_{j}$ respectively, i.e. $\mu= t+s+1$. Proceeding as  above, we get the following table.
\begin{center}
	\begin{tabular}{|c|c|c|c|}
		\hline
		Unit/Character & $\sqrt{\ell}$ & $\mathfrak{p}_{i}$ & $\mathfrak{q}_{j}$\\
		\hline
		-1 &  + & + & + \\
		\hline
		$\epsilon$ & - & +  & - \\
		\hline
		$-\epsilon$ & - & + & -\\
		\hline
	\end{tabular}
\end{center}
 Thus $r^{*}=1$, from which we deduce  that $$r_2(H) = \mu + r^{*} -3 = t+s+1 + 1-3=t+s-1.$$
\item If $\left(\frac{p_{i}}{\ell}\right)=-\left(\frac{q_{j}}{\ell}\right)=1$, for all $i=1, \dots, t$ and for all $j=1, \dots, s$, then  the prime ideals of  $k$ which ramify in  $\mathbb{K}$ are $(\sqrt{\ell})$,  $\wp_{i}$,  $\bar{\wp}_{i}$ and $\mathfrak{q}_{j}$,  where  $p\mathcal{O}_k=\wp_{i}\bar{\wp}_{i}$ and $\mathfrak{q}_{j}$ is the prime ideal of $k$ above  ${q}_{j}$, i.e. $\mu= 2t+s+1$.  Proceeding as  above, we get the following results:
\begin{center}
	\begin{tabular}{|c|c|c|c|c|}
		\hline
		Unit/Character & $\sqrt{\ell}$ & $\wp_{i}$ & $\bar{\wp}_{i}$ & $\mathfrak{q}_{j}$\\
		\hline
		-1 &  + & + & + &+\\
		\hline
		$\epsilon$ & - & $\left(\frac{p_i}{\ell}\right)_{_{4}}\left(\frac{\ell}{p_{i}}\right)_{_{4}} $ & $\left(\frac{p_i}{\ell}\right)_{_{4}}\left(\frac{\ell}{p_i}\right)_{_{4}} $ & - \\
		\hline
		$-\epsilon$ & - & $\left(\frac{p_i}{\ell}\right)_{_{4}}\left(\frac{\ell}{p_i}\right)_{_{4}} $ & $\left(\frac{p_i}{\ell}\right)_{_{4}}\left(\frac{\ell}{p_i}\right)_{_{4}} $ & -\\
		\hline
	\end{tabular}
\end{center}
 Thus $r^{*}=1$, this result implies that $$r_2(H) = \mu + r^{*} -3 = 2t+s+1 + 1-3=2t+s-1.$$

  \item If $\left(\frac{p_{i}}{\ell}\right)=-\left(\frac{q_{j}}{\ell}\right)=-1$, for all $i=1, \dots, t$ and  for all $j=1, \dots, s$, then  the prime ideals of  $k$ which ramify in  $\mathbb{K}$ are $(\sqrt{\ell})$,  $\mathfrak{p}_{i}$,  $\rho_{j}$,  and $\bar{\rho}_{j}$, where   $q_j\mathcal{O}_k=\rho_{j}\bar{\rho}_{j}$ and $\mathfrak{p}_{j}$ is the prime ideal of $k$ above  ${p}_{j}$, i.e. $\mu= t+2s+1$.  Proceeding as  above, we get the following table:
 \begin{center}
 	\begin{tabular}{|c|c|c|c|c|}
 		\hline
 		Unit/Character & $\sqrt{\ell}$ & $\mathfrak{p}_{i}$  & $\rho_{j}$&$\bar{\rho}_{j}$\\
 		\hline
 		-1 &  +& +  &-&-\\
 		\hline
 		$\epsilon$ & - & + & $\left(\frac{q_j}{\ell}\right)_{_{4}}\left(\frac{\ell}{q_{j}}\right)_{_{4}} $&-$\left(\frac{q_j}{\ell}\right)_{_{4}}\left(\frac{\ell}{q_{j}}\right)_{_{4}} $ \\
 		\hline
 		$-\epsilon$ & - & + & $-\left(\frac{q_j}{\ell}\right)_{_{4}}\left(\frac{\ell}{q_{j}}\right)_{_{4}} $&$\left(\frac{q_j}{\ell}\right)_{_{4}}\left(\frac{\ell}{q_{j}}\right)_{_{4}} $\\
 		\hline
 	\end{tabular}
 \end{center}
 Thus $r^{*}=0$, this implies that $$r_2(H) = \mu + r^{*} -3 = t+2s+1 + 0-3=t+2s-2.$$

 \item If $\left(\frac{p_{i}}{\ell}\right)=\left(\frac{q_{j}}{\ell}\right)=1$, for all $i=1, \dots, t$ and for all $j=1, \dots, s$, then   the prime ideals of  $k$ which ramify in $\mathbb{K}$ are $ (\sqrt{\ell})$,  $\wp_{i}$,  $\bar{\wp}_{i}$,  $\rho_{j}$ and $\bar{\rho}_{j}$,  where $p_i\mathcal{O}_k=\wp_{i}\bar{\wp}_{i}$ and $q_j\mathcal{O}_k=\rho_{j}\bar{\rho}_{j}$, i.e. $\mu= 2t+2s+1$.  Proceeding as  above, we get the following table:
 \begin{center}
 	\begin{tabular}{|c|c|c|c|c|c|}
 		\hline
 		Unit/Character & $\sqrt{\ell}$ & $\wp_{i}$ & $\bar{\wp}_{i}$ & $\rho_{j}$&$\bar{\rho}_{j}$\\
 		\hline
 		-1 &  +& + & + &-&-\\
 		\hline
 		$\epsilon$ & - &$\left(\frac{p_i}{\ell}\right)_{_{4}}\left(\frac{\ell}{p_i}\right)_{_{4}} $ & $-\left(\frac{p_i}{\ell}\right)_{_{4}}\left(\frac{\ell}{p_i}\right)_{_{4}} $& $\left(\frac{q_j}{\ell}\right)_{_{4}}\left(\frac{\ell}{q_j}\right)_{_{4}} $ & $\left(\frac{q_j}{\ell}\right)_{_{4}}\left(\frac{\ell}{q_j}\right)_{_{4}} $  \\
 		\hline
 		$-\epsilon$ & - &$-\left(\frac{p_i}{\ell}\right)_{_{4}}\left(\frac{\ell}{p_i}\right)_{_{4}} $ & $\left(\frac{p_i}{\ell}\right)_{_{4}}\left(\frac{\ell}{p_i}\right)_{_{4}} $ &$\left(\frac{q_j}{\ell}\right)_{_{4}}\left(\frac{\ell}{q_j}\right)_{_{4}} $ & $\left(\frac{q_j}{\ell}\right)_{_{4}}\left(\frac{\ell}{q_j}\right)_{_{4}} $   \\
 		\hline
 	\end{tabular}
 \end{center}
 From which we deduce that $r^{*}=0$, and thus  $$r_2(H) = \mu + r^{*} -3 = 2t+2s+1 + 0-3=2t+2s-2.$$
\end{enumerate}
In general,   if $\displaystyle\prod _{j=1}^{j=s}q_{j}=\prod_{i=1}^{i=s_1}q_{i}\prod _{j=s_1+1}^{j=s}q_{j}$, with   $s$   is an odd integer,  $q_{i}\equiv q_{j}\equiv3\, \pmod4$ and $\left(\frac{q_{i}}{\ell}\right)=-\left(\frac{q_{j}}{\ell}\right)=-1,  i\in\{1, \dots, s_{1}\},  j\in\{1, \dots, s_{2}\}$, then taking into account the discussions above one gets $r^{*}=0$, and thus
$$r_2(H)= h+s_{1}+2s_{2} + 1 + 0 -3=h+s_{1}+2(s-s_{1})-2,$$ where  $h$ is always the number of the prime divisors of all the $p_{i}$'s,    $p_{i}\equiv1\, \pmod4,  i\in\{1, \dots, t\}$ in  $k$.
\end{proof}
\subsection{\textbf{Sixth case: $n =\delta\prod_{i=1}^{i=t}p_{i}\prod _{j=1}^{j=s}q_{j}$, $s$ is even, or $n =2\prod_{i=1}^{i=t}p_{i}\prod _{j=1}^{j=s}q_{j}$, $s$ is odd, where $ p_{i}\equiv -q_{j}\equiv 1\pmod4 $ for all $ (i,j)$}}
\begin{them}
Let $\mathbb{K}=\mathbb{Q}(\sqrt{n\epsilon_{0}\sqrt{\ell}})$ be a real cyclic quartic number field, where $\ell\equiv5\pmod8$ is a positive prime integer, $n$ a square-free positive  integer relatively prime to $\ell$ and  $\epsilon_{0}$ the fundamental unit of the quadratic subfield $k=\mathbb{Q}(\sqrt{\ell})$. Assume  $n =\displaystyle\delta\prod_{i=1}^{i=t}p_{i}\prod _{j=1}^{j=s}q_{j}$  with $s$  even or  $n =2\displaystyle\prod_{i=1}^{i=t}p_{i}\prod _{j=1}^{j=s}q_{j}$  with $s$ odd, where $ p_{i}\equiv- q_{j}\equiv 1 \pmod4$, for all $(i, j)\in\{1, \dots, t\}\times\{1, \dots, s\}$, are prime integers. Denote by $h$ the number of the prime ideals of $k$ above all the  $p_{i}'s$,  $i\in\{1, \dots, t\}$.
	\begin{enumerate}[\rm1.]
	\item If, for all $j$,  $\left(\frac{q_{j}}{\ell}\right)=-1$,  then  $r_2(H) =h+s$.
	\item If, for all $j$, $\left(\frac{q_{j}}{\ell}\right)=1$, then  $r_2(H) =h+2s-1$.
\end{enumerate}
Moreover, if $\displaystyle\prod _{j=1}^{j=s}q_{j}=\prod_{i=1}^{i=s_1}q_{i}\prod _{j=s_1+1}^{j=s}q_{j}$ with   $\left(\frac{q_{i}}{\ell}\right)=-\left(\frac{q_{j}}{\ell}\right)=-1$, for all $i\in\{1, \dots, s_1\}$ and  $j\in\{s_1+1, \dots, s\}$,  assuming $s$  even  if  $n =\displaystyle\delta\prod_{i=1}^{i=t}p_{i}\prod _{j=1}^{j=s}q_{j}$ and  odd if $n =\displaystyle 2\prod_{i=1}^{i=t}p_{i}\prod _{j=1}^{j=s}q_{j}$, then   $r_2(H)= h+s_{1}+2(s-s_{1})-1.$
\end{them}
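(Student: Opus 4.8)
The plan is to follow the three-step scheme of the previous cases: use Theorem \ref{wi87} to list the primes of $k$ ramifying in $\mathbb{K}$ (hence $\mu$), compute $r^{*}$ from Remark \ref{14} by evaluating $\left(\frac{-1,\,d}{\mathfrak{P}}\right)$ and $\left(\frac{\epsilon,\,d}{\mathfrak{P}}\right)$ at each ramifying $\mathfrak{P}$, and conclude with Theorem \ref{1} in the form $r_2(H)=\mu+r^{*}-3$. The feature separating this case from the fifth is that $\mathfrak{2}$ now ramifies. I would first check this: the hypotheses ($s$ even when $n=\delta\prod p_i\prod q_j$, $s$ odd when $n=2\prod p_i\prod q_j$) give $n\not\equiv3\pmod4$, and since $\ell\equiv5\pmod8$ forces $b\equiv2\pmod4$ in $\ell=b^2+c^2$, Theorem \ref{wi87} yields $e\in\{2,3\}$; thus $2$ ramifies in $K/\mathbb{Q}$ and $\mathfrak{2}$ ramifies in $\mathbb{K}/k$. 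Counting $(\sqrt{\ell})$, $\mathfrak{2}$, the $h$ primes above the $p_i$'s, and the primes above the $q_j$'s gives $\mu=2+h+s$ for the first assertion and $\mu=2+h+2s$ for the second.

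For the character table I would recycle the earlier computations. As each $p_i\equiv1\pmod4$, its columns are exactly those of Theorem \ref{3}: the $-1$ row is $+$, and the $\epsilon$ row is $+$ at an inert $\mathfrak{p}_i$ and $\left(\frac{p_i}{\ell}\right)_{4}\left(\frac{\ell}{p_i}\right)_{4}$, with equal sign on $\wp_i$ and $\bar{\wp}_i$, at a split $p_i$. As each $q_j\equiv3\pmod4$, its columns are those of the fifth case: the $-1$ row is $+$ at an inert $\mathfrak{q}_j$ but $-$ at a split $\rho_j,\bar{\rho}_j$, and the $\epsilon$ row is $-$ at $\mathfrak{q}_j$ and takes opposite signs $\pm\left(\frac{q_j}{\ell}\right)_{4}\left(\frac{\ell}{q_j}\right)_{4}$ on $\rho_j,\bar{\rho}_j$. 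The column $(\sqrt{\ell})$ is computed as in the first case, giving $+$ for $-1$ and $-$ for $\epsilon$ via $\epsilon=\frac{u+v\sqrt{\ell}}{2}$ and $u^2-v^2\ell=-4$. Only the $\mathfrak{2}$ column is new.

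The main obstacle is the $2$-adic symbol at $\mathfrak{2}$, an inert prime whose completion is the unramified quadratic extension of $\mathbb{Q}_2$. This can be handled by the same $2$-adic analysis as in the fourth case, Theorem \ref{7}, with the count $t$ of primes $\equiv3\pmod4$ replaced by $s$; more efficiently, I would determine the two entries from the Hilbert product formula. Because $\mathbb{K}$ is totally real the infinite symbols vanish, and for a global unit $\eta$ the symbol $\left(\frac{\eta,\,d}{v}\right)$ is trivial away from the ramified primes, so the product of $\left(\frac{\eta,\,d}{\mathfrak{P}}\right)$ over all ramified $\mathfrak{P}$ equals $1$. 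Taking $\eta=-1$ forces $\left(\frac{-1,\,d}{\mathfrak{2}}\right)=+$, and taking $\eta=\epsilon$ forces $\left(\frac{\epsilon,\,d}{\mathfrak{2}}\right)=(-1)^{s+1}$.

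With the table complete, $r^{*}$ follows from Remark \ref{14}. In the first assertion all $q_j$ are inert, so the $-1$ row is identically $+$ (here the value $\left(\frac{-1,\,d}{\mathfrak{2}}\right)=+$ is essential) and $-1$ is a norm, while $\epsilon$ and $-\epsilon$ fail already at $(\sqrt{\ell})$; hence $r^{*}=1$ and $r_2(H)=h+s$. In the second assertion the split primes $\rho_j,\bar{\rho}_j$ put a $-$ in the $-1$ row, so none of $-1,\epsilon,-\epsilon$ is a norm, $r^{*}=0$, and $r_2(H)=h+2s-1$. The final mixed statement reads the same way: a single split $q_j$ already breaks the $-1$ row, so $r^{*}=0$ and $r_2(H)=\mu-3=h+s_1+2(s-s_1)-1$. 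The point to watch is that the transition $r^{*}=1\to r^{*}=0$ is caused precisely by the split $q_j$ columns; the entry $(-1)^{s+1}$ at $\mathfrak{2}$ never changes the norm status of a unit, since $(\sqrt{\ell})$ already excludes $\epsilon$ and $-\epsilon$.
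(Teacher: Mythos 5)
Your proposal is correct and follows essentially the same scheme as the paper: the same count $\mu=2+h+s$ (resp. $2+h+2s$), the same character tables recycled from the earlier cases, and the same conclusion via Remark \ref{14} and Theorem \ref{1}. Your only deviations are improvements in exposition rather than substance: you justify explicitly, via Theorem \ref{wi87} and the congruences on $n$ and $b$, that $\mathfrak{2}$ ramifies (the paper calls this ``routine''), and you obtain the $\mathfrak{2}$-column entries $\left(\frac{-1,\,d}{\mathfrak{2}}\right)=+1$ and $\left(\frac{\epsilon,\,d}{\mathfrak{2}}\right)=(-1)^{s+1}$ from the Hilbert product formula, which agrees with the paper's asserted $(-1)^{s-1}$ and is a clean way to pin down the one entry the paper leaves implicit.
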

\begin{proof}
There are also four cases to distinguish:
\begin{enumerate}[\rm1.]
\item If $\left(\frac{p_{i}}{\ell}\right)=\left(\frac{q_{j}}{\ell}\right)=-1$, for all  $i\in\{1, \dots, t\}$ and  $j\in\{1, \dots, s\}$, then   the prime ideals of  $k$ which ramify in  $\mathbb{K}$ are $\mathfrak{2}$,  $(\sqrt{\ell})$,  $\mathfrak{p}_{i}$ and $\mathfrak{q}_{j}$.  Proceeding as in the first cases  above, we get the following table:
\begin{center}
	\begin{tabular}{|c|c|c|c|c|}
		\hline
		Unit/Character & $\sqrt{\ell}$&$\mathfrak{2}$ & $\mathfrak{p}_{i}$ & $\mathfrak{q}_{j}$\\
		\hline
		-1 & +& + & + & + \\
		\hline
		$\epsilon$ & - &$(-1)^{s-1}$& +  & - \\
		\hline
		$-\epsilon$ & - &$(-1)^{s-1}$& + & -\\
		\hline
	\end{tabular}
\end{center}
Hence $r^{*}=1$, and $$r_2(H) = \mu + r^{*} -3 = t+s+2 + 1-3=t+s.$$
\item If $\left(\frac{p_{i}}{\ell}\right)=-\left(\frac{q_{j}}{\ell}\right)=1$, for all  $i\in\{1, \dots, t\}$ and   $j\in\{1, \dots, s\}$, then    the prime ideals of  $k$ which ramify in  $\mathbb{K}$ are $\mathfrak{2}$,  $(\sqrt{\ell})$,  $\wp_{i}$,  $\bar{\wp}_{i}$ and $\mathfrak{q}_{j}$, where $p_i\mathcal{O}_k=\wp_{i}\bar{\wp}_{i}$. Proceeding as in the first cases  above, we get the following table:
\begin{center}
	\begin{tabular}{|c|c|c|c|c|c|}
		\hline
		Unit/Character & $\sqrt{\ell}$ &$\mathfrak{2}$& $\wp_{i}$ & $\bar{\wp}_{i}$ & $\mathfrak{q}_{j}$\\
		\hline
		-1 &  + &+& + & + &+\\
		\hline
		$\epsilon$ & - &$(-1)^{s-1}$& $\left(\frac{p_i}{\ell}\right)_{_{4}}\left(\frac{\ell}{p_i}\right)_{_{4}} $ & $\left(\frac{p_i}{\ell}\right)_{_{4}}\left(\frac{\ell}{p_i}\right)_{_{4}} $ & - \\
		\hline
		$-\epsilon$ & - & $(-1)^{s-1}$ & $\left(\frac{p_i}{\ell}\right)_{_{4}}\left(\frac{\ell}{p_i}\right)_{_{4}} $ & $\left(\frac{p_i}{\ell}\right)_{_{4}}\left(\frac{\ell}{p_i}\right)_{_{4}} $ & -\\
		\hline
	\end{tabular}
\end{center}
Hence $r^{*}=1$, and $$r_2(H) = \mu + r^{*} -3 = 2t+s+2 + 1-3=2t+s.$$
\item If $\left(\frac{p_{i}}{\ell}\right)=-\left(\frac{q_{j}}{\ell}\right)=-1$; for all  $i\in\{1, \dots, t\}$ and   $j\in\{1, \dots, s\}$, then  the prime ideals of  $k$ which ramify in  $\mathbb{K}$ are $\mathfrak{2}$,  $(\sqrt{\ell})$,  $\mathfrak{p}_{i}$,  $\rho_{j}$,  and $\bar{\rho}_{j}$, where $q_j\mathcal{O}_k=\rho_{j}\bar{\rho}_{j}$. Proceeding as in the first cases  above, we get the following table:
\begin{center}
	\begin{tabular}{|c|c|c|c|c|c|}
		\hline
		Unit/Character & $\sqrt{\ell}$ &$\mathfrak{2}$& $\mathfrak{p}_{i}$  & $\rho_{j}$&$\bar{\rho}_{j}$\\
		\hline
		-1 &  + &+& +  &-&-\\
		\hline
		$\epsilon$ & - &$(-1)^{s-1}$& + & $\left(\frac{q_j}{\ell}\right)_{_{4}}\left(\frac{\ell}{q _j}\right)_{_{4}} $&$-\left(\frac{q_j}{\ell}\right)_{_{4}}\left(\frac{\ell}{q_{j}}\right)_{_{4}} $ \\
		\hline
		$-\epsilon$ & - &$(-1)^{s-1}$& + & $-\left(\frac{q_{j}}{\ell}\right)_{_{4}}\left(\frac{\ell}{q_{j}}\right)_{_{4}} $&$\left(\frac{q_{j}}{\ell}\right)_{_{4}}\left(\frac{\ell}{q_{j}}\right)_{_{4}} $\\
		\hline
	\end{tabular}
\end{center}
 Hence $r^{*}=0$, and $$r_2(H) = \mu + r^{*} -3 = t+2s+2 + 0-3=t+2s-1.$$
\item If $\left(\frac{p_{i}}{\ell}\right)=\left(\frac{q_{j}}{\ell}\right)=1$, for all $i\in\{1, \dots, t\}$ and   $j\in\{1, \dots, s\}$, then   the prime ideals of  $k$ which ramify in $\mathbb{K}$ are $\mathfrak{2}$,  $(\sqrt{\ell})$,  $\wp_{i}$,  $\bar{\wp}_{i}$,  $\rho_{j}$ and $\bar{\rho}_{j}$,  where $p_i\mathcal{O}_k=\wp_{i}\bar{\wp}_{i}$,  $q_j\mathrm{O}_k=\rho_{j}\bar{\rho}_{j}$. Proceeding as in the first cases  above, we get the following table:
\begin{center}
	\begin{tabular}{|c|c|c|c|c|c|c|}
		\hline
		Unit/Chara & $\sqrt{\ell}$ &$\mathfrak{2}$& $\wp_{i}$ & $\bar{\wp}_{i}$ & $\rho_{j}$&$\bar{\rho}_{j}$\\
		\hline
		-1 &  + &+& + & + &-&-\\
		\hline
		$\epsilon$ & - &$(-1)^{s-1}$& $\left(\frac{p_{i}}{\ell}\right)_{_{4}}\left(\frac{\ell}{p_{i}}\right)_{_{4}} $ & $\left(\frac{p_{i}}{\ell}\right)_{_{4}}\left(\frac{\ell}{p_{i}}\right)_{_{4}} $ & $\left(\frac{q_{j}}{\ell}\right)_{_{4}}\left(\frac{\ell}{q_{j}}\right)_{_{4}} $ & $-\left(\frac{q_{j}}{\ell}\right)_{_{4}}\left(\frac{\ell}{q_{j}}\right)_{_{4}} $\\
		\hline
		$-\epsilon$ & - &$(-1)^{s-1}$& $\left(\frac{p_{i}}{\ell}\right)_{_{4}}\left(\frac{\ell}{p_{i}}\right)_{_{4}} $ & $\left(\frac{p_{i}}{\ell}\right)_{_{4}}\left(\frac{\ell}{p_{i}}\right)_{_{4}} $ & $-\left(\frac{q_{j}}{\ell}\right)_{_{4}}\left(\frac{\ell}{q_{j}}\right)_{_{4}} $ & $\left(\frac{q_{j}}{\ell}\right)_{_{4}}\left(\frac{\ell}{q_{j}}\right)_{_{4}} $\\
		\hline
	\end{tabular}
\end{center}
 Hence $r^{*}=0$,  and $$r_2(H) = \mu + r^{*} -3 = 2t+2s+2 + 0-3=2t+2s-1.$$
\end{enumerate}
In general,  if $\displaystyle\prod _{j=1}^{j=s}q_{j}=\prod_{i=1}^{i=s_1}q_{i}\prod _{j=s_1+1}^{j=s}q_{j}$ with   $\left(\frac{q_{i}}{\ell}\right)=-\left(\frac{q_{j}}{\ell}\right)=-1$, for all $i\in\{1, \dots, s_1\}$ and  $j\in\{s_1+1, \dots, s\}$,  assuming $s$  even  if  $n =\displaystyle\delta\prod_{i=1}^{i=t}p_{i}\prod _{j=1}^{j=s}q_{j}$ and  odd if $n =\displaystyle 2\prod_{i=1}^{i=t}p_{i}\prod _{j=1}^{j=s}q_{j}$, then  taking into account the discussions above one gets $r^{*}=0$, and thus
 $r_2(H)= h+s_{1}+2(s-s_{1})+2+0-3 = h+s_{1}+2(s-s_{1})-1.$
\end{proof}

\section{Applications}\label{16}
In this section, we shall determine the integers  $n$ satisfying $r_2(H)$, the rank of the $2$-class group  $H$ of $\mathbb{K}=\mathbb{Q}(\sqrt{n\epsilon_{0}\sqrt{\ell}})$,  is equal to $0$, $1$, $2$ or $3$. For this we adopt the following notations: $p$ and $p_i$ (resp. $q$ and $q_i$), $i\in\NN^*$, are prime integers  congruent to $1$ (resp. $3$) modulo $4$. $\delta=1$ or $2$. The following theorems are simple deductions from the results of the previous sections.
 \begin{them}\label{11}
	Let $\mathbb{K}=\mathbb{Q}(\sqrt{n\epsilon_{0}\sqrt{\ell}})$ be a real cyclic quartic number field, where $\ell\equiv5\pmod8$ is a positive prime integer, $n$ a square-free positive  integer relatively prime to $\ell$ and  $\epsilon_{0}$ the fundamental unit of the quadratic subfield $k=\mathbb{Q}(\sqrt{\ell})$. The class number of $\mathbb{K}$ is odd if and only if one of the following assertions holds:
	\begin{enumerate}[\rm1.]
		\item $n=1$ or $2$.
		\item $n$ is a prime integer congruent to $3\pmod4$.	
	\end{enumerate}
\end{them}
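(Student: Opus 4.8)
The plan is to recognise Theorem \ref{11} as a direct corollary of the six rank computations of \S\ \ref{15}. Since $H$ is by definition the $2$-Sylow subgroup of the class group of $\mathbb{K}$, the class number of $\mathbb{K}$ is odd if and only if $H$ is trivial, that is if and only if $r_2(H)=0$. The six theorems of \S\ \ref{15} partition every admissible $n$ (square-free, positive, prime to $\ell$) according to whether $2\mid n$, the residues modulo $4$ of its odd prime divisors, and the symbols $\left(\frac{p}{\ell}\right)$, and in each case they supply a closed formula for $r_2(H)$. The proof therefore consists of asking, case by case, when that formula equals $0$.

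First I would isolate the two families that do give $r_2(H)=0$. By Theorem \ref{12} one has $r_2(H)=0$ exactly for $n=1$ and $n=2$, which is assertion $1$. By Theorem \ref{6}, for odd $n=\prod_{i=1}^{t}p_i$ with all $p_i\equiv 3\pmod 4$ and $t$ odd, the rank equals $t-1$, $2t-2$, or $t_1+2t_2-2$ according to the splitting behaviour; the first two vanish precisely when $t=1$, while the genuinely mixed value satisfies $t_1+2t_2-2\ge 1$ because $t_1,t_2\ge 1$. Hence in this case $r_2(H)=0$ forces $n$ to be a single prime congruent to $3$ modulo $4$, and conversely every such prime occurs here with $t=1$; this is assertion $2$.

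Next I would verify that the remaining four theorems never return $0$. In each of them the factorisation is genuinely non-trivial, so the governing parameters satisfy $t\ge 1$ (Theorems \ref{3}, \ref{7}) or $h\ge 1$ and $s\ge 1$ (the fifth and sixth cases)—otherwise $n$ would already be covered by one of the earlier cases. The corresponding ranks are $t$, $2t$, $t_1+2t_2$ (Theorem \ref{3}); $t$, $2t-1$, $t_1+2t_2-1$ (Theorem \ref{7}); $h+s-1$, $h+2s-2$ (fifth case) and $h+s$, $h+2s-1$ (sixth case), together with their mixed variants. Under the constraints $t\ge 1$, resp. $h\ge 1$ and $s\ge 1$, each of these is at least $1$, so no further $n$ yields a trivial $2$-class group. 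Combining the three paragraphs gives exactly assertions $1$ and $2$.

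The point that requires genuine care is the boundary behaviour of the ``moreover'' formulas in Theorems \ref{6} and \ref{7} and in the fifth and sixth cases. Each mixed formula carries an implicit hypothesis that both prime types actually occur, because the integer $r^{*}$ drops by one when one passes from a pure configuration to a genuinely mixed one. One must not substitute a degenerate parameter value—such as $t_2=0$, which would falsely make $n=2p$ or $n=pq$ look like a rank-$0$ example—into a mixed formula; in every such specialisation the correct rank is read off the corresponding pure sub-case, whose larger $r^{*}$ restores a strictly positive value. Checking that no spurious rank-$0$ solution slips through these boundary values is the only delicate step in the argument.
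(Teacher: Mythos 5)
Your proposal is correct and follows exactly the route the paper intends: the paper offers no explicit proof of Theorem \ref{11}, stating only that the theorems of \S\ \ref{16} are ``simple deductions from the results of the previous sections,'' and your case-by-case check of when each rank formula from \S\ \ref{15} vanishes is precisely that deduction carried out in full. Your closing remark about not substituting degenerate parameter values (e.g.\ $t_2=0$) into the mixed formulas is a sound precaution that the paper leaves implicit.
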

\begin{exams} For all the examples below, we use PARI/GP calculator version 2.9.1 (64bit), Nov 22, 2016.
	\begin{enumerate}[\rm1.]
		\item For  $n=1$ and  $\ell=173\equiv5\pmod8$,   $H$ is trivial, the class number of the class group of  $\mathbb{K}=\mathbb{Q}(\sqrt{\epsilon_{0}\sqrt{\ell}})$ is $5$.\\
		For  $n=2$ and  $\ell=197\equiv5\pmod8$,  $H$ is trivial, in  reality  the class group of  $\mathbb{K}=\mathbb{Q}(\sqrt{2\epsilon_{0}\sqrt{\ell}})$ is of type $(3, 3)$.
		\item For  $n=q=67\equiv3\pmod4$ and  $\ell=53\equiv5\pmod8$, we have $(\frac{q}{\ell})=-1$  and $H$ is trivial, the class number of the class group of  $\mathbb{K}=\mathbb{Q}(\sqrt{67\epsilon_{0}\sqrt{\ell}})$ is 17.\\
		For  $n=q=79\equiv3\pmod4$ and  $\ell=13\equiv5\pmod8$, we have $(\frac{q}{\ell})=1$  and  the class group of  $\mathbb{K}=\mathbb{Q}(\sqrt{79\epsilon_{0}\sqrt{\ell}})$ is trivial.
	\end{enumerate}
\end{exams}
 \begin{them}\label{8}
	Let $\mathbb{K}=\mathbb{Q}(\sqrt{n\epsilon_{0}\sqrt{\ell}})$ be a real cyclic quartic number field, where $\ell\equiv5\pmod8$ is a prime, $n$ a square-free positive  integer relatively prime to $\ell$ and  $\epsilon_{0}$ the fundamental unit of the quadratic subfield $k=\mathbb{Q}(\sqrt{\ell})$.
 $H$ is cyclic  if and only if one of the following assertions holds:
	\begin{enumerate}[\rm1.]
		\item $n=\delta p$ with $(\frac{p}{\ell})=-1$.
		\item $n=2q$.
		\item $n=pq$ with $(\frac{p}{\ell})=-1$.	
	\end{enumerate}
\end{them}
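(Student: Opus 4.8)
The plan is to read the statement off the rank formulas already established in \S\ref{15}. Since $r_2(H)=\dim_{\mathbb{F}_2}H/H^2$, and since the case $r_2(H)=0$ (equivalently $H$ trivial) has just been settled in Theorem \ref{11}, the requirement that $H$ be cyclic amounts here to the single equation $r_2(H)=1$; so the whole proof reduces to listing the admissible $n$ for which $r_2(H)=1$. The starting observation is that every square-free positive integer $n$ prime to $\ell$ falls into exactly one of the six cases treated in \S\ref{15}, the case being pinned down by $\delta\in\{1,2\}$, by the number $t$ of prime divisors of $n$ congruent to $1\pmod4$, by the number $s$ of prime divisors congruent to $3\pmod4$, and by the parity of $s$. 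For each such $n$, Theorem \ref{1} gives $r_2(H)=\mu+r^{*}-3$, and the theorems of \S\ref{15} evaluate $\mu$ and $r^{*}$ explicitly.

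Next I would impose $r_2(H)=1$ in turn in each case. If $s=0$, then either $n\in\{1,2\}$ (Theorem \ref{12}, where $r_2(H)=0$) or $n=\delta\prod p_i$ (Theorem \ref{3}, where $r_2(H)=t_1+2t_2$, with $t_1,t_2$ the numbers of primes satisfying $\left(\frac{p}{\ell}\right)=-1$ and $+1$ respectively); the value $1$ is reached exactly when $t=1$ and $\left(\frac{p}{\ell}\right)=-1$, giving $n=\delta p$ with $\left(\frac{p}{\ell}\right)=-1$. If $t=0$, the subcase $\delta=1$, $s$ odd (Theorem \ref{6}) gives $r_2(H)\in\{s-1,2s-2,\dots\}$, which cannot equal $1$ because $s$ is odd, while the cases governed by Theorem \ref{7} give $r_2(H)\in\{s,2s-1,\dots\}$, equal to $1$ only for $s=1$; as $s=1$ is odd this forces $\delta=2$, producing $n=2q$ with no condition on $\left(\frac{q}{\ell}\right)$. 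If finally $t\ge1$ and $s\ge1$, writing $h=t_1+2t_2\ge1$ for the number of prime ideals of $k$ above the primes $\equiv1\pmod4$, the fifth and sixth case theorems give $r_2(H)$ equal to one of $h+s-1$, $h+2s-2$, $h+s$, $h+2s-1$; the only way to obtain $1$ is in the fifth case ($\delta=1$, $s$ odd) with $h=1$ and $s=1$, that is $n=pq$ with $\left(\frac{p}{\ell}\right)=-1$, the symbol $\left(\frac{q}{\ell}\right)$ being immaterial. Collecting these solutions yields precisely the three families, and each listed $n$ has just been shown to satisfy $r_2(H)=1$, so both implications hold simultaneously.

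The only real difficulty is the bookkeeping needed to be certain that no stray solution of $r_2(H)=1$ hides outside these three families. The delicate point is the genuinely mixed configurations, in which both values of the symbol occur among the $p_i$ or among the $q_j$: there $r_2(H)$ has the shape $h+s_1+2s_2-c$ with $c\in\{1,2\}$ and each present summand at least $1$. One must check that either the minimal value of $h+s_1+2s_2-c$ already exceeds $1$ when $h\ge1$, or, when $h=0$, the parity imposed on $s=s_1+s_2$ (odd in the third and fifth cases, even in the fourth and sixth) excludes the lone small candidate $\left(s_1,s_2\right)=(1,1)$; in either situation no mixed configuration attains rank $1$. Once this verification is carried out, the case analysis closes and the theorem follows.
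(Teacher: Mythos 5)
Your proposal is correct and follows essentially the same route as the paper: in \S\ref{16} the paper explicitly treats Theorem \ref{8} as a ``simple deduction from the results of the previous sections,'' i.e.\ one identifies nontrivial cyclic $H$ with $r_2(H)=\mu+r^{*}-3=1$ and sifts the six case theorems of \S\ref{15}, which is exactly your argument with the bookkeeping written out. The one compressed point in your final paragraph --- in the fourth and sixth cases the mixed candidate $(s_1,s_2)=(1,1)$ is ruled out because the formula $h+s_1+2s_2-1$ then yields rank $2$, not by the parity of $s$ (parity is only needed in the third case, where $h=0$ and $s$ odd excludes $s_1+2s_2=3$ with $(s_1,s_2)=(1,1)$) --- does not affect the conclusion.
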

\begin{exams}Here are some examples.
\begin{enumerate}[\rm1.]
 \item For  $n=p=13\equiv1\pmod4$ and  $\ell=37\equiv5\pmod8$, we have $(\frac{p}{\ell})=-1$  and $H$ is cyclic of order $2$.\\
 For  $n=p=17\equiv1\pmod4$ and  $\ell=29\equiv5\pmod8$, we have $(\frac{p}{\ell})=-1$  and $H$ is cyclic of order $2$.
 \item For  $n=2p=2.41\equiv2\pmod4$ and  $\ell=13\equiv5\pmod8$, we have $(\frac{p}{\ell})=-1$  and $H$ is cyclic of order $2$.\\
   For  $n=2p=2.51\equiv2\pmod4$ and  $\ell=61\equiv5\pmod8$, we have $(\frac{p}{\ell})=-1$  and $H$ is cyclic of order $2$.
\item For  $n=2q=2.19\equiv-2\pmod4$ and  $\ell=53\equiv5\pmod8$, we have  $H$ is cyclic of order $2$.
\item For  $n=pq=5.7\equiv3\pmod4$ and  $\ell=53\equiv5\pmod8$, we have $(\frac{p}{\ell})=-1$, $(\frac{p}{q})=-1$ and $H$ is cyclic of order $2$.\\
For  $n=pq=5.11\equiv3\pmod4$ and  $\ell=13\equiv5\pmod8$, we have $(\frac{p}{\ell})=-1$, $(\frac{p}{q})=1$ and $H$ is cyclic of order $2$.
   \end{enumerate}
\end{exams}
\begin{them}\label{9}
Let $\mathbb{K}=\mathbb{Q}(\sqrt{n\epsilon_{0}\sqrt{\ell}})$ be a real cyclic quartic number field, where $\ell\equiv5\pmod8$ is a prime, $n$ a square-free positive  integer relatively prime to $\ell$ and  $\epsilon_{0}$ the fundamental unit of the quadratic subfield $k=\mathbb{Q}(\sqrt{\ell})$.
 The rank $r_2(H)$ equals $2$  if and only if $n$  takes one of the following forms.
\begin{enumerate}[\rm1.]
  \item $n=\delta p$  and  $(\frac{p}{\ell})=1$.
  \item $n=\delta p_1p_2$  and  $(\frac{p_1}{\ell})=(\frac{p_2}{\ell})=-1$.
  \item $n=pq$ and  $(\frac{p}{\ell})=1$.
  \item $n=2pq$ and $(\frac{p}{\ell})=-1$.
  \item $n=p_1p_2q$ and   $(\frac{p_1}{\ell})=(\frac{p_2}{\ell})=-1$.
  \item $n=\delta q_1q_2$ and at least one of the two symbols $(\frac{q_1}{\ell})$, $(\frac{q_2}{\ell})$ equals $-1$.
 \item $n= q_1q_2q_3$ and at most one of the  symbols $(\frac{q_1}{\ell})$, $(\frac{q_2}{\ell})$, $(\frac{q_3}{\ell})$ equals $1$.
\end{enumerate}
\end{them}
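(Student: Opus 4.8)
The plan is to read this off from the six case theorems of \S\ref{15}, in the same spirit as Theorems \ref{11} and \ref{8} were obtained for the ranks $0$ and $1$. The first step is to note that every square-free positive $n$ relatively prime to $\ell$ falls into exactly one of those six cases, the sorting being governed by three pieces of data: the number $t$ of prime divisors $\equiv1\pmod4$, the number $s$ of prime divisors $\equiv3\pmod4$, and whether $\delta=1$ or $2$. Concretely, with $s=0$ one is in the First case (Theorem \ref{3}) if $t\geq1$ and in the Second (Theorem \ref{12}) if $t=0$; with $t=0$ and $s\geq1$ one is in the Third case (Theorem \ref{6}) when $\delta=1$ and $s$ is odd, and in the Fourth (Theorem \ref{7}) otherwise; and with $t\geq1$ and $s\geq1$ one is in the Fifth case when $\delta=1$ and $s$ is odd, in the Sixth case otherwise. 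I would record this partition first, so that proving the theorem reduces to solving ``$r_2(H)=2$'' inside each single case.

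It is convenient to abbreviate by $h=t_1+2t_2$ the number of prime ideals of $k$ above the divisors of $n$ that are $\equiv1\pmod4$ (here $t_1$, resp.\ $t_2$, counts those with $\left(\frac{\cdot}{\ell}\right)=-1$, resp.\ $=1$), and to write $s_1$, $s_2$ for the number of divisors $\equiv3\pmod4$ with symbol $-1$, resp.\ $+1$, so that $s=s_1+s_2$. A direct inspection shows that the six formulas of \S\ref{15} combine into the single expression
\[
r_2(H)=h+s_1+2s_2+\varepsilon_2+r^*-2,
\]
where $r^*=1$ exactly when $s_2=0$ (no divisor $\equiv3\pmod4$ splits in $k$, equivalently $-1\in N_{\mathbb{K}/k}(\mathbb{K}^{*})$, since $\epsilon_0$ and $-\epsilon_0$ are never norms by the $\left(\sqrt{\ell}\right)$-column of the tables) and $r^*=0$ otherwise, while $\varepsilon_2=0$ precisely when $\delta=1$ and $s$ is odd — the one configuration in which $\mathfrak{2}$ stays unramified — and $\varepsilon_2=1$ otherwise. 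Granting this, the second step is purely arithmetic: impose $r_2(H)=2$ and list the nonnegative solutions $(h,s_1,s_2,\delta)$ compatible with the constraints of each ambient case. The Second case gives $r_2(H)=0$ and contributes nothing; the First gives $r_2(H)=h$, whence $h=2$ and hence either one split $p$ or two inert $p$'s, producing forms 1 and 2; the Fifth forces $s=1$ with $h=2$, producing forms 3 and 5 (the single $q$ being of either kind); and the Sixth forces $s=1$, $h=1$, $\delta=2$, producing form 4.

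The step needing the most care, and where I expect the real bookkeeping to lie, is checking that the seven forms are \emph{exactly} the solution set, with neither omissions nor spurious entries, because the admissibility of a candidate $n$ is controlled jointly by $\delta$, by the parity of $s$, and by the residue symbols. Two phenomena must be watched. First, a given product can sit in different cases with different ranks: $n=2pq$ with $\left(\frac{p}{\ell}\right)=1$ lies in the Sixth case and has rank $3$, so form 3 must be stated with $\delta=1$ only, while $n=pq$ with $\left(\frac{p}{\ell}\right)=-1$ lies in the Fifth case and has rank $1$, so form 4 must carry the factor $2$. Second, the threshold wordings ``at least one $\left(\frac{q_j}{\ell}\right)=-1$'' in form 6 and ``at most one $\left(\frac{q_j}{\ell}\right)=1$'' in form 7 are precisely the cut-offs at which $h+s_1+2s_2+\varepsilon_2+r^*-2$ drops to $2$ under the fixed parity of $s$: for $n=\delta q_1q_2$ (Fourth case) this forces $s_1\geq1$, and for $n=q_1q_2q_3$ (Third case) it forces $s_2\leq1$. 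Assembling the solutions of the six cases and discarding those whose $\delta$ or parity violates the hypotheses of the ambient case then returns exactly the list of the statement, which completes the proof.
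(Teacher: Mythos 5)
Your proposal is correct and takes essentially the same route as the paper, which offers no separate argument for Theorem \ref{9} beyond declaring it a ``simple deduction'' from the six case theorems of \S\ \ref{15} — precisely the exhaustive case-by-case enumeration you carry out. Your unified formula $r_2(H)=h+s_1+2s_2+\varepsilon_2+r^{*}-2$, with $r^{*}=1$ exactly when $s_2=0$ and $\varepsilon_2=0$ exactly when $\delta=1$ and $s$ is odd, is a faithful compact repackaging of those theorems, and your solution list (including the boundary checks ruling out $n=2pq$ with $\left(\frac{p}{\ell}\right)=1$ and $n=pq$ with $\left(\frac{p}{\ell}\right)=-1$, and the threshold conditions in forms 6 and 7) reproduces the stated seven forms exactly.
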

\begin{exams}Here are some examples.
\begin{enumerate}[\rm1.]
 \item For  $n=p=13\equiv1\pmod4$ and  $\ell=101\equiv5\pmod8$, we have $(\frac{p}{\ell})=1$  and $H$ is of type $(2, 2)$.\\
 For  $n=2p=2.73\equiv2\pmod4$ and  $\ell=109\equiv5\pmod8$, we have $(\frac{p}{\ell})=1$  and $H$ is of type $(2, 4)$.
 \item For  $n=p_1p_2=17.37\equiv1\pmod4$ and  $\ell=29\equiv5\pmod8$, we have $(\frac{p_1}{\ell})=-1$, $(\frac{p_2}{\ell})=-1$   and $H$ is of type $(2, 2)$.
 \item For  $n=pq=13.11\equiv3\pmod4$ and  $\ell=53\equiv5\pmod8$, we have $(\frac{p}{\ell})=1$  and $H$ is of type $(2, 2)$.
\item For  $n=2pq=2.17.11\equiv-2\pmod4$ and  $\ell=29\equiv5\pmod8$, we have $(\frac{p}{\ell})=-1$ and $H$ is of type $(2, 2)$.
\item For  $n=p_1p_2q=17.37.23\equiv3\pmod4$ and  $\ell=61\equiv5\pmod8$, we have $(\frac{p_1}{\ell})=-1$, $(\frac{p_2}{\ell})=-1$ and $H$ is of type $(2, 2)$.
\item For  $n=q_1q_2=79.83\equiv1\pmod4$ and  $\ell=37\equiv5\pmod8$, we have $(\frac{q_1}{\ell})=-1$, $(\frac{q_2}{\ell})=1$ and $H$ is of type $(2, 2)$.
For  $n=2q_1q_2=2.47.59\equiv2\pmod4$ and  $\ell=13\equiv5\pmod8$, we have $(\frac{q_1}{\ell})=-1$, $(\frac{q_2}{\ell})=-1$ and $H$ is of type $(2, 2)$.
\item For  $n=q_1q_2q_3=23.71.83\equiv3\pmod4$ and  $\ell=61\equiv5\pmod8$, we have $(\frac{q_1}{\ell})=-1$, $(\frac{q_2}{\ell})=-1$, $(\frac{q_3}{\ell})=1$ and $H$ is of type $(2, 2)$.
   \end{enumerate}
\end{exams}
\begin{them}\label{10}
Let $\mathbb{K}=\mathbb{Q}(\sqrt{n\epsilon_{0}\sqrt{\ell}})$ be a real cyclic quartic number field, where $\ell\equiv5\pmod8$ is a prime, $n$ a square-free positive  integer relatively prime to $\ell$ and  $\epsilon_{0}$ the fundamental unit of the quadratic subfield $k=\mathbb{Q}(\sqrt{\ell})$.
 The rank $r_2(H)$ equals $3$  if and only if $n$  takes one of the following forms.
\begin{enumerate}[\rm1.]
  \item $n=\delta p_1p_2$ and $(\frac{p_1}{\ell})=-(\frac{p_2}{\ell})=1$.
  \item $n=\delta p_1p_2p_3$ and  $(\frac{p_i}{\ell})=-1$ for all $i\in\{1, 2, 3\}$.
  \item $n=2pq$ and $(\frac{p}{\ell})=1$.
  \item $n=\delta q_1q_2$ and  $(\frac{q_i}{\ell})=1$ for all $i\in\{1, 2\}$.
 \item $n= q_1q_2q_3$ and only one of the symbols $(\frac{q_i}{\ell})$, $i\in\{1, 2, 3\}$, is $-1$.
\item $n= 2q_1q_2q_3$ and at most one of the  symbols $(\frac{q_i}{\ell})$, $i\in\{1, 2, 3\}$,  is $1$.
 \item $n=p_1p_2q$ and   $(\frac{p_1}{\ell})=-(\frac{p_2}{\ell})=1$.
 \item $n=2p_1p_2q$ and   $(\frac{p_1}{\ell})=(\frac{p_2}{\ell})=-1$.
 \item $n= \delta pq_1q_2$ and $(\frac{p}{\ell})=-1$ and at least one of the  symbols $(\frac{q_i}{\ell})$, $i\in\{1, 2\}$,  is $-1$.
  \item $n= pq_1q_2q_3$ and $(\frac{p}{\ell})=-1$ and at most one of the  symbols $(\frac{q_i}{\ell})$, $i\in\{1, 2, 3\}$,  is $1$.
  \item $n=p_1p_2p_3q$ and  $(\frac{p_i}{\ell})=-1$ for all $i\in\{1, 2, 3\}$.
\end{enumerate}
\end{them}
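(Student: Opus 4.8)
The plan is to read $r_2(H)$ directly off the six theorems of \S\ref{15}, which between them cover every admissible $n$, impose $r_2(H)=3$, and collect the resulting shapes of $n$. Because those six theorems are exhaustive, the same enumeration delivers both the ``if'' and the ``only if'' directions at once. Concretely, I would write $n=\delta\prod_{i}p_{i}\prod_{j}q_{j}$ with the $p_{i}\equiv1$ and $q_{j}\equiv3\pmod4$, and attach to $n$ five parameters: $t_{1}$ (resp. $t_{2}$), the number of $p_{i}$ with $\left(\frac{p_{i}}{\ell}\right)=-1$ (resp. $=1$); $s_{1}$ (resp. $s_{2}$), the number of $q_{j}$ with $\left(\frac{q_{j}}{\ell}\right)=-1$ (resp. $=1$); and $\delta\in\{1,2\}$. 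Each choice of $(t_{1},t_{2},s_{1},s_{2},\delta)$ puts $n$ into exactly one of Theorems \ref{3}, \ref{12}, \ref{6}, \ref{7} and the two theorems of the fifth and sixth cases.

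Rather than juggle the six cases separately, I would first repackage their character tables into one master identity. Inspecting those tables, the value of $r^{*}$ produced by Remark \ref{14} depends only on whether $-1$ is everywhere a local norm: $\epsilon_{0}$ and $-\epsilon_{0}$ always fail at $(\sqrt{\ell})$, so $r^{*}=1$ exactly when $-1\in N_{\KK/k}(\KK^{*})$, i.e. when no $q_{j}$ splits in $k$ ($s_{2}=0$), and $r^{*}=0$ otherwise. Likewise $\mathfrak{2}$ ramifies in $\KK$ precisely unless $\delta=1$ and $s_{1}+s_{2}$ is odd (this is what separates the third/fifth cases from the fourth/sixth in \S\ref{15}, and ultimately comes from Theorem \ref{wi87}); writing $\varepsilon=1$ in the ramified case and $\varepsilon=0$ otherwise, the number of ramified primes is $\mu=1+\varepsilon+(t_{1}+2t_{2})+(s_{1}+2s_{2})$. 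Feeding this into Theorem \ref{1} yields the working formula
$$r_2(H)=\varepsilon+(t_{1}+2t_{2})+(s_{1}+2s_{2})+r^{*}-2.$$

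With this identity the problem reduces to listing all nonnegative integer solutions of $\varepsilon+(t_{1}+2t_{2})+(s_{1}+2s_{2})+r^{*}=5$ subject to the constraints $r^{*}=1\Leftrightarrow s_{2}=0$ and $\varepsilon=0\Leftrightarrow(\delta=1\text{ and }s_{1}+s_{2}\text{ odd})$. I would split first on $s_{2}=0$ versus $s_{2}\ge1$ and then on the value of $\varepsilon$, which in each branch leaves a short linear equation in $t_{1}+2t_{2}$, $s_{1}$ and $s_{2}$ with finitely many solutions to be translated back into forms of $n$. For example $s_{2}=0$, $\varepsilon=1$ forces $(t_{1}+2t_{2})+s_{1}=3$, which unpacks into $\delta p_{1}p_{2}p_{3}$ with all $\left(\frac{p_{i}}{\ell}\right)=-1$, into $2q_{1}q_{2}q_{3}$ with all $\left(\frac{q_{j}}{\ell}\right)=-1$, and so on; running through the remaining branches then produces exactly the eleven displayed items.

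The genuine work here is organizational rather than conceptual. The points I expect to need care are twofold. First, the forms in which a symbol is left unspecified (items 3, 7, 8, 11): there I must verify that both values of the omitted $\left(\frac{q}{\ell}\right)$ give $r_2(H)=3$, which is exactly the coincidence that the configuration $s_{2}=0,\ r^{*}=1$ and the configuration $s_{2}=1,\ r^{*}=0$ can land on the same total. Second, the coupling among the parity of $s_{1}+s_{2}$, the value of $\delta$, and the ramification of $\mathfrak{2}$, which is precisely what forces the alternative ``$\delta\,\cdots$ with $s$ even'' versus ``$2\,\cdots$ with $s$ odd'' descriptions that recur throughout the list. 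No new estimate beyond the six theorems already proved is required.
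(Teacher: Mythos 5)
Your proposal is correct and takes essentially the same route as the paper, which proves Theorem \ref{10} exactly as a ``simple deduction'' from the six rank theorems of \S\ref{15}. Your master formula $r_2(H)=\varepsilon+(t_{1}+2t_{2})+(s_{1}+2s_{2})+r^{*}-2$, with $r^{*}=1$ precisely when $s_{2}=0$ and $\varepsilon=0$ precisely when $\delta=1$ and $s_{1}+s_{2}$ is odd, faithfully repackages those six theorems (including the compensation $s_{2}=0,\,r^{*}=1$ versus $s_{2}=1,\,r^{*}=0$ that explains the unspecified symbols in items 3, 7, 8, 11), and enumerating the solutions of $r_2(H)=3$ under these constraints yields exactly the eleven listed forms.
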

\begin{exams}Here are some examples.
\begin{enumerate}[\rm1.]
 \item For  $n=p_1p_2=37.89\equiv1\pmod4$ and  $\ell=5\equiv5\pmod8$, we have $(\frac{p_1}{\ell})=-1$, $(\frac{p_2}{\ell})=1$ and $H$ is of type $(2, 4, 4)$.\\
 For  $n=2p_1p_2=2.41.53.89\equiv2\pmod4$ and  $\ell=29\equiv5\pmod8$, we have $(\frac{p_1}{\ell})=-1$, $(\frac{p_2}{\ell})=1$ and $H$ is of type $(2, 2, 4)$.
 \item For  $n=p_1p_2p_3=17.53.89\equiv1\pmod4$ and  $\ell=61\equiv5\pmod8$, we have $(\frac{p_1}{\ell})=-1$, $(\frac{p_2}{\ell})=-1$, $(\frac{p_3}{\ell})=-1$ and $H$ is of type $(2, 2, 2)$.\\
     For  $n=2p_1p_2p_3=2.17.61.89\equiv2\pmod4$ and  $\ell=29\equiv5\pmod8$, we have $(\frac{p_1}{\ell})=-1$, $(\frac{p_2}{\ell})=-1$, $(\frac{p_3}{\ell})=-1$ and $H$ is of type $(2, 2, 2)$.
\item For  $n=2pq=2.53.79\equiv-2\pmod4$ and  $\ell=37\equiv5\pmod8$, we have $(\frac{p}{\ell})=1$ and $H$ is of type $(2, 2, 4)$.
\item For  $n=2q_1q_2=2.59.83\equiv2\pmod4$ and  $\ell=29\equiv5\pmod8$, we have $(\frac{q_1}{\ell})=(\frac{q_2}{\ell})=-1$ and $H$ is of type $(2, 2, 4)$.\\
For  $n=q_1q_2=67.71\equiv1\pmod4$ and  $\ell=37\equiv5\pmod8$, we have $(\frac{q_1}{\ell})=(\frac{q_2}{\ell})=-1$ and $H$ is of type $(2, 2, 2)$.
\item For  $n=q_1q_2q_3=19.47.71\equiv-1\pmod4$ and  $\ell=37\equiv5\pmod8$, we have $(\frac{q_1}{\ell})=-1$, $(\frac{q_2}{\ell})=(\frac{q_3}{\ell})=1$ and $H$ is of type $(2, 2, 2)$.
\item For  $n=2q_1q_2q_3=2.7.67.71\equiv-2\pmod4$ and  $\ell=53\equiv5\pmod8$, we have $(\frac{q_1}{\ell})=1$, $(\frac{q_2}{\ell})=(\frac{q_3}{\ell})=-1$ and $H$ is of type $(2, 2, 2)$.
\item For  $n=p_1p_2q=13.17.43\equiv3\pmod4$ and  $\ell=61\equiv5\pmod8$, we have $(\frac{p_1}{\ell})=-1$, $(\frac{p_2}{\ell})=1$  and $H$ is of type $(2, 4, 4)$.
    \item For  $n=2p_1p_2q=2.29.53.79\equiv2\pmod4$ and  $\ell=61\equiv5\pmod8$, we have $(\frac{p_1}{\ell})=(\frac{p_2}{\ell})=-1$  and $H$ is of type $(2, 2, 2)$.
\item For  $n=pq_1q_2=37.47.71\equiv1\pmod4$ and  $\ell=5\equiv5\pmod8$, we have $(\frac{p}{\ell})=-1$, $(\frac{q_1}{\ell})=-1$, $(\frac{q_2}{\ell})=1$  and $H$ is of type $(2, 2, 2)$.\\
    For  $n=2pq_1q_2=2.17.31.83\equiv2\pmod4$ and  $\ell=37\equiv5\pmod8$, we have $(\frac{p}{\ell})=-1$, $(\frac{q_1}{\ell})=-1$, $(\frac{q_2}{\ell})=1$  and $H$ is of type $(2, 2, 2)$.
\item For  $n=pq_1q_2q_3=5.43.31.71\equiv3\pmod4$ and  $\ell=13\equiv5\pmod8$, we have $(\frac{p}{\ell})=-1$, $(\frac{q_1}{\ell})=1$, $(\frac{q_2}{\ell})=(\frac{q_3}{\ell})=-1$  and $H$ is of type $(2, 2, 2)$.
\item For  $n=p_1p_2p_3q=13.17.29.83\equiv3\pmod4$ and  $\ell=37\equiv5\pmod8$, we have  $(\frac{p_1}{\ell})=(\frac{p_2}{\ell})=(\frac{p_3}{\ell})=-1$  and $H$ is of type $(2, 2, 2)$.
   \end{enumerate}
\end{exams}
\begin{rema}
 If the integer $n$ does not take any value in Theorems  \ref{11}, \ref{8}, \ref{9}, \ref{10}, then $r_2(H)\geq4$.
\end{rema}
\section*{Acknowledgment}
We would like to thank the unknown referee  for his/her several helpful suggestions and for calling our attention
to the missing details.
\small

\end{document}